\nonstopmode \numberwithin{equation}{section}
\newtheorem{theorem}{Theorem}[section]
\newtheorem{corollary}{Corollary}[section]
\newtheorem{lemma}{Lemma}[section]
\theoremstyle{definition}
\newtheorem{definition}{Definition}[section]
\newtheorem{problem}{Problem}[section]
\begin{document}

\title{Notes on group distance magicness of product graphs}

\author{A V Prajeesh}
\address{A V Prajeesh, Department of Mathematics, National Institute of Technology Calicut, Kozhikode~\textnormal{673601}, India. }
\email{prajeesh\_p150078ma@nitc.ac.in}

\author{K Paramasivam}
\address{Krishnan Paramasivam, Department of Mathematics, National Institute of Technology Calicut,  Kozhikode~\textnormal{673601}, India. }
\email{sivam@nitc.ac.in}

\subjclass[2010]{Primary 05C78, 05C25, 05C76}
\keywords{Additive abelian group, group distance magic, lexicographic product, direct product.}


\begin{abstract}
	If $l$ is a bijection from the vertex set $V(G)$ of a graph $G$ to an additive abelian group $\Gamma$ with $|V(G)|$ elements in such a way that for any vertex $u$ of $G$, the weight $w_{G}(u) = \sum_{v\in N_{G}(u)}l(v)$ is equal to the same element $\mu_o$ of $\Gamma$, then $l$ is called a $\Gamma$-distance magic labeling of $G$. A graph $G$ that admits such a labeling is called $\Gamma$-distance magic and if $G$ is $\Gamma$-distance magic for every additive abelian group $\Gamma$ with $|V(G)|$ elements, then $G$ is called a group distance magic graph. In this paper, we provide few results on the group distance magic labeling of product graphs, namely lexicographic and direct product of two graphs. We also prove some necessary conditions for a graph to be group distance magic and provide a characterization for a tree to be group distance magic.
\end{abstract}

\thanks{}

\maketitle
\pagestyle{myheadings}
\markboth{A V Prajeesh and Krishnan Paramasivam }{Notes on group distance magicness of product graphs}

\section{Introduction}
\noindent In this paper, we consider only simple and finite graphs. We use $V(G)$ for the vertex set and $E(G)$ for the edge set of a graph $G$. The neighborhood $N_G(v)$, or shortly $N(v)$ of a vertex $v$ of $G$ is the set of all vertices adjacent to $v$, and the degree $deg_G(v)$, or shortly $deg(v)$ of $v$ is the number of vertices in $N_G(v)$. The distance $d_G(u, v)$ between two vertices $u$ and $v$ of $G$ is the length of shortest path connecting $u$ and $v$. For more standard graph theoretic notation and terminology, we refer Bondy and
Murty \cite{bondy2008graph} and Hammack $et$ $al.$ \cite{hammack2011handbook}.
\par Recall two standard graph products (see \cite{hammack2011handbook}). Let $G$ and $H$ be two graphs. Both, the lexicographic product $G \circ H$ and the direct product $G\times H$ are graphs with the vertex set $V(G)\times V(H)$. Two vertices $(g,h)$ and $(g',h')$ are adjacent in:
\begin{itemize}
	\item[(i)] $G\circ H$ if and only if $g$ is adjacent to $g'$ in $G$, or $g=g'$ and $h$ is adjacent to $h'$ in $H$.
	\item[(ii)] $G\times H$ if and only if $g$ is adjacent to $g'$ in $G$ and $h$ is adjacent to $h'$ in $H$.
\end{itemize}
\par A \textit{distance magic labeling} of $G$ is a bijection $l : V(G) \rightarrow \{1,...,|V(G)|\}$, such that for any $u$ of $G$, the weight of $u$, $w_{G}(u) = \sum\limits_{v\in N_{G}(u)}l(v) $ is a constant $\mu$. A graph $G$ that admits such a labeling is called a distance magic graph \cite{vilfred}.
\par The concept of distance magic labeling was studied by Vilfred \cite{vilfred} as sigma labeling. Later, Miller $et$ $al.$ \cite{miller2003distance} called it a $1$-vertex magic vertex labeling and Sugeng $et$ $al.$ \cite{sugeng2009distance} referred the same as distance magic labeling.
\par The concept of distance magic labeling has been motivated by the construction of magic squares.  It is worth to mention the motivation given by Froncek $et$ $al.$ \cite{froncek2006fair} through an equalized incomplete tournament. An equalized incomplete tournament of $n$ teams with $r$ rounds,
$EIT(n, r)$ is a tournament which satisfies the following conditions:
\begin{itemize}
	\item [(i)] every team plays against exactly $r$ opponents.
	\item [(ii)] the total strength of the opponents, against which each team plays is a constant.
\end{itemize}
Therefore, finding a solution to an $EIT(n,r)$ is equivalent to obtain a distance magic labeling of an $r$-regular graph with $n$ vertices.
\par Most of important results and problems, which are more relevant and helpful in proving our results, are listed below.
\begin{theorem}
	\textnormal{\cite{vilfred,miller2003distance,jinnah,rao}} No $r$-regular graph with $r$-odd can be a distance magic graph.
\end{theorem}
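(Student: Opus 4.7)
The plan is to derive the magic constant $\mu$ by a standard double-counting argument and then show it cannot be an integer when $r$ is odd, which contradicts the fact that $\mu$ is a sum of positive integer labels.

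First, I would assume toward a contradiction that $G$ is an $r$-regular graph on $n$ vertices admitting a distance magic labeling $l$ with magic constant $\mu$. Summing the weight condition $w_G(u) = \mu$ over all vertices $u \in V(G)$ gives
$$n\mu = \sum_{u \in V(G)} \sum_{v \in N_G(u)} l(v).$$
On the right, each vertex $v$ contributes $l(v)$ once for every neighbor $u$ it has, i.e., exactly $\deg(v) = r$ times. Since $l$ is a bijection onto $\{1,\dots,n\}$, this yields
$$n\mu = r\sum_{v \in V(G)} l(v) = r\cdot\frac{n(n+1)}{2},$$
and hence $\mu = \dfrac{r(n+1)}{2}$.

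Next, I would invoke the handshake lemma: the sum of degrees $rn = 2|E(G)|$ is even, so when $r$ is odd, $n$ must be even, making $n+1$ odd. Therefore $r(n+1)$ is a product of two odd integers and is itself odd, so $\mu = r(n+1)/2$ fails to be an integer.

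The contradiction is immediate: $\mu$ is by definition a sum of vertex labels drawn from $\{1,\dots,n\}$, so it must be a positive integer. Hence no such labeling can exist, proving the theorem. I expect no genuine obstacle here; the only subtlety is remembering to use the handshake lemma to conclude that $n$ is even before asserting that $r(n+1)/2$ is non-integral.
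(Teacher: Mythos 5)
Your proof is correct: the double-counting identity $n\mu = r\sum_{v}l(v) = r\,\tfrac{n(n+1)}{2}$ together with the handshake-lemma observation that $r$ odd forces $n$ even (so $r(n+1)/2$ is not an integer) is exactly the standard argument, and it is the one given in the sources the paper cites; the paper itself states this theorem only as a quoted result without reproducing a proof. No gaps.
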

\begin{lemma}
	\textnormal{\cite{miller2003distance}} If $G$ contains two vertices $u$ and $v$ such that $|N(u) \cap N(v)| = deg(v)-1 = deg(u)-1$, then $G$ is not distance magic.
\end{lemma}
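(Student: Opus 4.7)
The plan is to extract degree and neighborhood information from the hypothesis, then argue by comparing the weights $w_G(u)$ and $w_G(v)$ under the assumed labeling and deriving a contradiction with injectivity.

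First I would note that the hypothesis immediately yields $\deg(u) = \deg(v)$; let $d$ denote this common value, so that $|N(u) \cap N(v)| = d-1$. I would then split into two cases according to whether $u$ and $v$ are adjacent, because this affects how $N(u) \cap N(v)$ sits inside $N(u)$ and $N(v)$.

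In the case $u \sim v$, I would observe that $v \in N(u) \setminus (N(u) \cap N(v))$ since loops are forbidden, so $N(u) \cap N(v) \subseteq N(u) \setminus \{v\}$; a cardinality count ($|N(u)\setminus\{v\}|=d-1=|N(u)\cap N(v)|$) forces $N(u) \setminus \{v\} = N(u) \cap N(v)$, and symmetrically $N(v) \setminus \{u\} = N(u) \cap N(v)$. Assuming $l$ is a distance magic labeling with magic constant $\mu$, computing
\begin{equation*}
w_G(u) = \sum_{z \in N(u)\cap N(v)} l(z) + l(v), \qquad w_G(v) = \sum_{z \in N(u)\cap N(v)} l(z) + l(u),
\end{equation*}
and equating both to $\mu$ gives $l(u) = l(v)$, contradicting the bijectivity of $l$. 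In the case $u \not\sim v$, the sets $N(u) \setminus N(v)$ and $N(v) \setminus N(u)$ each have exactly one element, say $x$ and $y$ respectively, and the analogous weight comparison forces $l(x) = l(y)$, hence $x = y$; but $x \in N(u)$ while $y \notin N(u)$, again a contradiction.

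I do not foresee any genuine obstacle here; the only point requiring care is keeping track of whether $u$ and $v$ themselves lie in each other's neighborhoods, which is precisely what motivates the two-case split. Once that bookkeeping is done, the proof is a direct comparison of weights together with injectivity of $l$.
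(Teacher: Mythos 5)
Your proof is correct. The paper itself does not prove this lemma (it is quoted from Miller \emph{et al.}), but it proves the group-valued analogue (Lemma 2.2) by exactly the argument you give: pick the vertices of $N(u)\setminus N(v)$ and $N(v)\setminus N(u)$, compare $w_G(u)$ and $w_G(v)$, cancel the common part, and contradict injectivity of $l$. Your version is in fact slightly more careful than the paper's, since you separate the cases $u\sim v$ and $u\not\sim v$: in the adjacent case the two ``extra'' vertices are $v$ and $u$ themselves, so the contradiction is $l(u)=l(v)$, while in the non-adjacent case it is $l(x)=l(y)$ with $x\in N(u)$, $y\notin N(u)$; the paper's phrasing glosses over this distinction, whereas your bookkeeping makes the claimed contradiction explicit in both situations.
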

\begin{theorem}\label{shafiq}
	\textnormal{\cite{shafiq2009distance}} Let $r \geq 1$ and $n \geq 3$. If $G$ is an $r$-regular graph and $C_{n}$ the cycle of
	length $n$, then $G\circ C_{n}$ admits a labeling if and only if $n = 4$.
\end{theorem}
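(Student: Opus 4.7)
The plan is to reduce distance magic labelings of $G\circ C_n$ to a rigid arithmetic constraint on each $C_n$-fiber. Let $m=|V(G)|$ and enumerate the vertices of $C_n$ as $h_1,\ldots,h_n$ in cyclic order. For each $g\in V(G)$, write $a_i(g)=l(g,h_i)$ and $S(g)=\sum_{i=1}^n a_i(g)$. From the definition of the lexicographic product, every neighbor of $(g,h_i)$ is either of the form $(g',h)$ for some $g'\in N_G(g)$ and $h\in V(C_n)$, or $(g,h_{i\pm 1})$, which gives
\[
w_{G\circ C_n}(g,h_i)=\sum_{g'\in N_G(g)} S(g')+a_{i-1}(g)+a_{i+1}(g),
\]
with subscripts taken modulo $n$. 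The first summand depends only on $g$, so the distance magic hypothesis forces $a_{i-1}(g)+a_{i+1}(g)$ to be a constant $c(g)$ independent of $i$.

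For necessity, I would subtract the identity at index $i$ from the identity at index $i+2$ to obtain $a_{i-1}(g)=a_{i+3}(g)$, i.e.\ $a_j(g)=a_{j+4}(g)$ for every $j\in\mathbb{Z}_n$. Thus each fiber labeling is invariant under translation by $4$ on $\mathbb{Z}_n$, whose orbits have cardinality $n/\gcd(n,4)$. Since $l$ is a bijection, the values $a_1(g),\ldots,a_n(g)$ are pairwise distinct, so every orbit must be a singleton, forcing $n\mid 4$. Combined with $n\geq 3$ this leaves $n=4$ as the only possibility.

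For sufficiency with $n=4$, I would exhibit an explicit labeling. Enumerate $V(G)=\{g_1,\ldots,g_m\}$ and set
\[
l(g_k,h_1)=2k-1,\quad l(g_k,h_2)=2k,\quad l(g_k,h_3)=4m-2k+2,\quad l(g_k,h_4)=4m-2k+1.
\]
As $k$ runs from $1$ to $m$ these values sweep out $\{1,\ldots,4m\}$ once each, so $l$ is a bijection. On each fiber the two opposite pairs $\{a_1(g_k),a_3(g_k)\}$ and $\{a_2(g_k),a_4(g_k)\}$ both sum to $4m+1$, hence $c(g_k)=4m+1$ and $S(g_k)=2(4m+1)$ for every $k$. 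Substituting into the weight formula yields $w_{G\circ C_4}(g,h_i)=(2r+1)(4m+1)$, independent of $(g,h_i)$.

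The necessity direction is essentially forced once the weight decomposition is in hand. The real work lies in the sufficiency direction, where one must simultaneously arrange that $S(g)$ is constant across $V(G)$ (so the $N_G(g)$-contribution is uniform) and that each fiber splits into two pairs with a common sum; the construction above accomplishes both by interleaving the low and high halves of $\{1,\ldots,4m\}$ two elements at a time.
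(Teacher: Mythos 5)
Your argument is correct in both directions: the weight decomposition $w_{G\circ C_n}(g,h_i)=\sum_{g'\in N_G(g)}S(g')+a_{i-1}(g)+a_{i+1}(g)$ is valid, the translation-by-four identity $a_j(g)=a_{j+4}(g)$ together with injectivity of $l$ forces $n\mid 4$ and hence $n=4$, and your interleaved labeling for $n=4$ is a bijection onto $\{1,\ldots,4m\}$ with opposite pairs summing to $4m+1$, giving the constant weight $(2r+1)(4m+1)$, where the $r$-regularity of $G$ is exactly what makes $\sum_{g'\in N_G(g)}S(g')$ uniform. Note that the paper only quotes this theorem from Shafiq, Ali and Simanjuntak without proof, so there is no in-paper argument to compare against; your proof follows the standard fiber-sum approach and is complete as written.
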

\par In 2009, Shafiq $et$ $al.$\cite{shafiq2009distance} posted a problem of the existence of distance magic labeling of the lexicographic product of a non-regular graph $G$ with $C_{4}$.
\begin{problem}\label{prb1}
	\textnormal{\cite{shafiq2009distance}} If $G$ is a non-regular graph, determine if there is a distance magic labeling of $G \circ C_{4}$.
\end{problem}
\par In 2018, Cichacz and G{\"o}rlich \cite{cichacz2018constant}, raised a similar question in the case of direct product of $G$ with $C_{4}$.
\begin{problem}\label{prb2}
	\textnormal{\cite{cichacz2018constant}} If $G$ is non-regular graph, determine if there is a distance magic labeling of $G \times C_{4}$.
\end{problem}
\par Anholcer $et$ $al.$ \cite{anholcer2015distance} defined a distance magic graph $G$ to be \textit{balanced} if there exists a bijection $l:V(G) \rightarrow \{1,...,|V(G)|\}$ such that for any $w$ of $G$, the following holds: \\ if $u \in N(w)$ with $l(u)=i$, then $\exists$ $v \in N(w), v \neq u ,$ with $l(v)= |V(G)|+1-i$. \\ Further, we call $u$, the twin vertex of $v$ and vice versa.
\par From \cite{anholcer2015distance}, It is clear that $G$ is a balanced distance magic graph or shortly, balanced-$dmg$ if and only if $G$ is regular and the vertex set of $G$ can be expressed as $\{v_{i},v_{i}': 1\leq i\leq \frac{|V(G)|}{2}\}$ such that  for any $i$, $N(v_{i}) = N(v_{i}')$, where $v_i$ is the twin vertex of $v_i'$. The graphs $K_{2n,2n}$ and $K_{2n}-M$, $M$ any perfect matching of $K_{2n}$ are examples of balanced-$dmg$'s.
\par The $k^{\textnormal{th}}$ power of a graph $G$ is a graph $G^{k}$ with the same set of vertices as $G$ and any two vertices $u$ and $v$ are connected if and only if $d_G(u,v)\leq k$.
\par In 2016, Arumugam $et$ $al.$ \cite{arumugam2016distance} proved the following result for the characterization of entire class of distance magic graphs $G$ with $\Delta(G) = |V(G)|-1.$ It is interesting to see that the following class of graphs is derived from balanced-$dmg$.
\begin{theorem}
	\textnormal{\cite{arumugam2016distance}}
	Let $G$ be any graph of order $n$ with $\Delta(G)= n-1.$ Then $G$ is a distance magic graph if and only if $n$ is odd and $G \cong (K_{n-1}-M) + K_{1}$ where $M$ is a perfect matching in $K_{n-1}.$
\end{theorem}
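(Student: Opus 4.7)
The plan is to handle the two implications separately. For the easier forward direction, I would exhibit an explicit labeling on $G = (K_{n-1} - M) + K_1$. Writing $V(G) = \{v\} \cup \bigcup_{i=1}^{(n-1)/2} \{a_i, b_i\}$ where $v$ is the $K_1$-vertex and each $\{a_i, b_i\}$ is an edge of $M$, I would set $l(v) = n$, $l(a_i) = i$, and $l(b_i) = n - i$. Since $a_i$ and $b_i$ are non-adjacent in $G$, $N_G(a_i) = N_G(b_i) = V(G) \setminus \{a_i, b_i\}$, so $w(a_i) = w(b_i) = \tfrac{n(n+1)}{2} - n = \tfrac{n(n-1)}{2}$, which also equals $w(v) = \tfrac{n(n+1)}{2} - n$.

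For the converse, I would fix a distance magic labeling $l$, let $v$ be a vertex of degree $n-1$, and write $c := l(v)$, $G' := G - v$, and $\overline{N}_{G'}(u) := V(G') \setminus (N_{G'}(u) \cup \{u\})$. Combining $w(v) = \mu$ with $w(u) = c + \sum_{x \in N_{G'}(u)} l(x) = \mu$ and the total label sum gives the key identity
\begin{equation*}
\sum_{x \in \overline{N}_{G'}(u)} l(x) = c - l(u) \quad \text{for every } u \in V(G').
\end{equation*}
A first consequence: if $\overline{N}_{G'}(u) = \emptyset$ for some $u$, then $u$ has degree $n-1$ in $G$ and together with $v$ satisfies the hypothesis of Lemma~1.1, contradicting distance magicness of $G$. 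So every $u \in V(G')$ has a non-neighbor in $G'$, the sum $c - l(u)$ is strictly positive, and consequently $c = n$.

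I would then determine $\overline{N}_{G'}(u)$ for every $u$ by downward induction on $l(u)$. Writing $u_k$ for the vertex with $l(u_k) = k$, the vertex $u_{n-1}$ has required sum $1$, which forces $\overline{N}_{G'}(u_{n-1}) = \{u_1\}$; feeding this back into the identity for $u_1$ yields $\overline{N}_{G'}(u_1) = \{u_{n-1}\}$. Inductively, once the pairs $\{u_k, u_{n-k}\}$ have been pinned down for $k = 1, \ldots, j-1$, the vertices $u_1, \ldots, u_{j-1}$ are locked as neighbors of every other vertex in $V(G')$, so the labels available for $\overline{N}_{G'}(u_{n-j})$ lie in $\{j, j+1, \ldots, n-1\} \setminus \{n-j\}$, whose minimum two-element sum is at least $j + (j+1) > j$; hence the only admissible subset summing to $j$ is the singleton $\{j\}$, and by symmetry $\overline{N}_{G'}(u_j) = \{u_{n-j}\}$. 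The process completes exactly when $n$ is odd; if $n$ were even, the vertex $u_{n/2}$ would be left with empty $\overline{N}_{G'}$ but nonzero required sum $n/2$, a contradiction. Thus $n$ is odd and the complement of $G'$ in $K_{n-1}$ is a perfect matching $M$, giving $G \cong (K_{n-1} - M) + K_1$.

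The main obstacle will be making the induction step airtight: at each $j$ I have to rule out every subset of the still-available labels that sums to $j$ other than $\{j\}$, which rests on the simple but crucial bookkeeping that the previously paired vertices $u_1, \ldots, u_{j-1}$ are already locked in as neighbors of $u_{n-j}$. The parity obstruction at the final step is then a clean corollary of the same bookkeeping.
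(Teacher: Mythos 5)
Your argument is correct, but note that the paper itself offers no proof to compare against: this theorem appears there only as a quoted background result, cited to Arumugam, Kamatchi and Kov\'a\v{r}, so your write-up is a self-contained reconstruction rather than a rival to an in-paper argument. Both halves check out. The explicit labeling $l(v)=n$, $l(a_i)=i$, $l(b_i)=n-i$ does give constant weight $\tfrac{n(n-1)}{2}$, and the key identity $\sum_{x\in \overline{N}_{G'}(u)}l(x)=c-l(u)$ follows correctly from comparing $w(u)$, $w(v)$ and the total label sum; the nonemptiness of $\overline{N}_{G'}(u)$ via Lemma 1.1 (or, just as quickly, via $w(u)=S-l(u)$ for two full-degree vertices) then forces $c=n$. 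The downward induction is sound: once $u_1,\dots,u_{j-1}$ are locked as neighbors of everything else, any nonempty subset of the remaining labels $\{j,\dots,n-1\}\setminus\{n-j\}$ summing to $j$ must be $\{j\}$, and the ``feeding back'' step implicitly uses the symmetry of non-adjacency (since $u_{n-j}\in\overline{N}_{G'}(u_j)$ already exhausts the required sum $n-j$, nothing else can be missing); it would be worth one explicit sentence, but it is not a gap. The parity contradiction for even $n$ at the label $n/2$ and the conclusion that the complement of $G-v$ inside $K_{n-1}$ is a perfect matching complete the converse correctly.
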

\par In 2004, Rao \cite{rao} proved the following result.
\begin{theorem}
	\textnormal{\cite{rao}}
	The graph $C_{k}\Box C_{m}$ is distance magic if and only if $k=m\equiv 2\mod 4.$
\end{theorem}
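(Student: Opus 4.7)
My plan is to prove the two directions separately. For necessity, let $l$ be a distance magic labeling of $G = C_{k}\Box C_{m}$; since $G$ is $4$-regular on $km$ vertices, summing $w_{G}(v) = \mu$ over all $v$ gives $\mu = 2(km+1)$. Defining row sums $S_{i} = \sum_{j\in\mathbb{Z}_{m}} l(i,j)$ and summing $w_{G}(i,j) = \mu$ over $j$ yields the recurrence $S_{i-1} + 2S_{i} + S_{i+1} = m\mu$, whose cyclic solution forces $S_{i}$ to be constant when $k$ is odd and two-valued (alternating) when $k$ is even; an analogous statement holds for the column sums $T_{j}$.

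The core of the necessity argument is the pointwise identity
\begin{equation*}
l(i-2,j) + l(i+2,j) = l(i,j-2) + l(i,j+2),
\end{equation*}
obtained by subtracting $w_{G}(i+1,j-1) + w_{G}(i+1,j+1) = 2\mu$ from $w_{G}(i,j) + w_{G}(i+2,j) = 2\mu$; by induction this extends to $l(i-2t,j) + l(i+2t,j) = l(i,j-2t) + l(i,j+2t)$ for every $t \in \mathbb{Z}$. I then case-split on the parities of $k$ and $m$. When both are odd, using the Chinese Remainder Theorem to choose $t$ with $2t \equiv 1 \pmod{k}$ and $2t\equiv 1 \pmod{m}$ and combining with the magic equation forces $l(i-1,j) + l(i+1,j) = \mu/2$; iterating yields $l(i+4,j) = l(i,j)$, which since $\gcd(4,k) = 1$ collapses $l$ along each column. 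When exactly one of $k,m$ is even, or both are even with $k\ne m$, a suitable $t$ makes $2t$ zero modulo one parameter and nontrivial modulo the other, producing $l(i,j-a) + l(i,j+a) = 2l(i,j)$ for a nonzero shift $a$ of orbit-size at least two; this cyclic arithmetic progression must be constant, again contradicting injectivity. Finally, when $k = m\equiv 0\pmod 4$, taking $t = k/4$ and applying the resulting identity twice shows that $l$ is invariant under the nontrivial involution $(i,j)\mapsto(i+k/2,\,j-k/2)$. Only $k = m \equiv 2\pmod 4$ survives.

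For the converse, given $n \equiv 2\pmod 4$, I would construct an explicit labeling of $C_{n}\Box C_{n}$, for instance of the form $l(i,j) = n\,\alpha(i,j) + \beta(i,j) + 1$ where $\alpha,\beta : \mathbb{Z}_{n}\times\mathbb{Z}_{n} \to \{0,\ldots,n-1\}$ are chosen so that $(\alpha,\beta)$ is a bijection and each of $\alpha$ and $\beta$ has constant four-neighbor sum; the parity hypothesis $n\equiv 2\pmod 4$ (equivalently, $n/2$ odd) is used to balance a cancellation that obstructs the analogous construction for $n\equiv 0\pmod 4$. The main obstacle I anticipate is this sufficiency construction: the necessity analysis is largely systematic once the iterated identity is in hand, although the most delicate subcase---$k$ odd with $m = 2k$, and its transpose---will require an additional structural identity of the form $l(i,q) + l(i+2,q) + l(i,q+k) + l(i+2,q+k) = \mu$, derived by combining the key identity with the magic equation at $(i,j+k)$, together with the induced complementary pair-structure on rows to close the argument.
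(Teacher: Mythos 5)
The paper itself offers no proof of this statement --- it is quoted verbatim from Rao's survey as background --- so there is no in-paper argument to compare yours with; I can only assess the proposal on its own terms. Your necessity skeleton is largely sound: $\mu=2(km+1)$ is right, the identity $l(i-2,j)+l(i+2,j)=l(i,j-2)+l(i,j+2)$ does follow from the four weight equations you combine, it does iterate to arbitrary shifts $2t$ (the doubling/induction step can be made precise), and choosing $2t$ trivial modulo one cycle length and nontrivial modulo the other correctly kills every case except $k=m$ even and the exceptional family you flag. For $k=m\equiv 0\pmod 4$ you do not even need an involution argument: with $2t=k/2$ the identity reads $2\,l(i+k/2,j)=2\,l(i,j+k/2)$, so two distinct vertices get equal labels and injectivity fails at once.

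However, two pieces are genuinely missing, and you concede both. First, the subcase $k$ odd, $m=2k$ (and its transpose) is not closed. Taking $2t=k+1$ and inserting the resulting relation into the magic equation gives, after the usual period argument, the relation $l(i,q)+l(i,q+k)=\mu/2$ for all $(i,q)$ (your four-term identity is a weaker consequence of this); but this antipodal pairing is perfectly consistent with the complementary pairing of $\{1,\dots,2k^2\}$ --- there are exactly $k^2$ complementary pairs and $k^2$ antipodal position pairs --- so no contradiction follows from it alone, and the ``induced complementary pair-structure on rows'' you invoke is precisely where a genuine argument is still required. Second, the sufficiency direction is only a plan: no $\alpha,\beta$ are exhibited, no verification of constant four-neighbour sums or of bijectivity of $(\alpha,\beta)$ is given, and the role of $n\equiv 2\pmod 4$ is asserted rather than demonstrated, even though this is the half of the theorem that requires an explicit construction. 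As it stands, the proposal establishes neither direction completely; the necessity analysis outside the $m=2k$ family is the solid part.
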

\par Now, a natural question arises that for all graphs, which are not distance magic, whether one can introduce a new concept by replacing the existing co-domain, $\{1,..., |V(G)|\}$ of the distance magic labeling $l$ by an another set or even by an algebraic structure such as group so that these graphs can admit such a magic-type labeling. 
\par Motivated by this fact, in 2013, Froncek \cite{froncek2013group}, introduced the notion of group distance magic labeling of graphs. He proved that $C_{k}\Box C_{m}$ is $\mathbb{Z}_{km}$-distance magic if and only if $km$ is even, where $\mathbb{Z}_{km}$ is a finite additive abelian group.
\par Throughout this paper, the algebraic structure $\Gamma=(\Gamma, +)$ is a finite additive abelian group or shortly, abelian group, where $\lq +$' is a binary operation on $\Gamma $. The order and identity element of $\Gamma$ are denoted by $|\Gamma|$ and $e$ respectively. Recall that any non-identity element $g$ of $\Gamma$ is an involution if $g=-g$, where $-g$ is the additive inverse of $g$. If $g$ is an involution of $\Gamma$, then $2g=e$. Also, any non-trivial finite group $\Gamma$ has involution if and only if $|\Gamma|$ is even. The fundamental theorem of abelian group states that a finite abelian group $\Gamma$ can be expressed as a direct product of cyclic groups of prime power order, where the product is unique up to the order of subgroups. Moreover, the sum of all elements of $\Gamma$, $s(\Gamma) = \sum _{g\in \Gamma}g$ is equal to the sum of all involutions of $\Gamma$.
\begin{lemma}\label{grp1}
	\textnormal{\cite{combe2004magic}} Let $\Gamma$ be an abelian group.
	\begin{enumerate}
		\item[\textnormal{(i)}] If~ $\Gamma$ has exactly one involution $g'$, then $s(\Gamma)=g'$.
		\item[\textnormal{(ii)}] If~ $\Gamma$ has no involutions, or more than one involution, then $s(\Gamma)=e$.
	\end{enumerate}
\end{lemma}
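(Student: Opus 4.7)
The plan is to exploit the natural involution $g \mapsto -g$ on $\Gamma$. Partition the group as $\{e\} \cup I \cup R$, where $I$ is the set of involutions and $R$ consists of all remaining non-identity elements. By definition every $g \in R$ satisfies $g \neq -g$, and $-g$ also lies in $R$; so $R$ decomposes into disjoint pairs $\{g, -g\}$, each contributing $g + (-g) = e$ to the sum. Adding in $e$ and the elements of $I$, one gets
\begin{equation*}
s(\Gamma) \;=\; e \;+\; \sum_{g \in I} g \;+\; \sum_{\{g,-g\}\subseteq R}\bigl(g + (-g)\bigr) \;=\; \sum_{g \in I} g.
\end{equation*}

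This formula alone disposes of most of the lemma. For part~(i), $|I| = 1$ forces $s(\Gamma) = g'$. For part~(ii) with $I = \emptyset$, the empty sum is $e$.

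The only substantive case is part~(ii) with $|I| \geq 2$, where one must show $\sum_{g \in I} g = e$. I would first note that $H = I \cup \{e\}$ is a subgroup of $\Gamma$: it is exactly the $2$-torsion subgroup $\{h \in \Gamma : 2h = e\}$, and in particular, for $g,h \in I$ distinct, $2(g+h) = e$ places $g+h$ in $H$. Since every non-identity element of $H$ has order $2$, the fundamental theorem of finite abelian groups gives $H \cong (\mathbb{Z}_2)^k$ for some $k \geq 2$ (the hypothesis $|I| \geq 2$ gives $|H| \geq 3$, hence $|H| \geq 4$).

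To finish, I would compute the sum of all elements of $H$ by a coset-pairing trick: pick any non-identity $a \in H$ and partition $H$ into the $|H|/2$ pairs $\{x, x+a\}$. Each pair contributes $2x + a = a$, so the total is $(|H|/2)\, a = 2^{k-1}a$. For $k \geq 2$, the integer $2^{k-1}$ is even, and since $2a = e$, this gives $\sum_{h \in H} h = e$, equivalently $\sum_{g \in I} g = e$, as required. The main obstacle is this last case: the inversion-pairing argument that handles everything else is mute on $I$ (its elements are fixed points of the pairing), and recognising the elementary abelian $2$-group structure on $H$ is what makes the sum collapse.
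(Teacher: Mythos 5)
Your proof is correct and complete. Note that the paper does not prove this lemma at all: it is quoted from Combe, Nelson and Palmer, and the only trace of an argument in the text is the remark immediately preceding it that $s(\Gamma)$ equals the sum of the involutions of $\Gamma$ --- which is exactly your first reduction via the pairing $g \leftrightarrow -g$ on the non-involutions. So the substantive content you supply beyond what the paper records is the case of two or more involutions, and that part is handled correctly: $H = I \cup \{e\}$ is the $2$-torsion subgroup, hence an elementary abelian $2$-group of order $2^k$ with $k \geq 2$, and your translation-pairing $\{x, x+a\}$ gives $\sum_{h \in H} h = 2^{k-1}a = e$ because $2^{k-1}$ is even and $2a = e$. (Two cosmetic points: the full strength of the structure theorem is not needed, only that $|H|$ is a power of $2$, i.e.\ that $H$ is a vector space over the field with two elements; and the argument of course uses finiteness of $\Gamma$, which the paper assumes throughout since $s(\Gamma)$ is only defined for finite groups.)
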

\par For more group theory related terminology and notation, we refer Herstein \cite{herstein2006topics}.
\begin{definition}
	\par\textnormal{\cite{froncek2013group}} If $\Gamma$ is an abelian group and $G$ is a graph such that $|V(G)|=|\Gamma|$, then a bijection $l:V(G)\rightarrow \Gamma$ is said to be a \textit{$\Gamma$-distance magic labeling} of $G$ if for any $u$ of $G$, the weight of $u$, $w_{G}(u) = \sum_{v\in N_{G}(u)}l(v)$ is equal to the same element $\mu_{o}$ of $\Gamma$. A graph $G$ that admits such a labeling is called a $\Gamma$-distance magic graph and the element $\mu_{o}$ is called the magic constant associated with the labeling $l$ of $G$.
\end{definition} 
\par Whenever $l$ is a distance magic labeling of a graph $G$ on $n$ vertices with the magic constant $\mu$, then consider a new labeling $l^*$ on $G$ as,

\[l^*(v)= \begin{cases} 
l(v) & \textnormal{~if~~} l(v)<n \\
0 & \textnormal{~if~~} l(v)=n,
\end{cases}
\]
which is a $\mathbb{Z}_n$-distance magic labeling with magic constant $\mu_{0}$, where $ \mu_{0}\equiv\mu~(\textnormal{mod}~n)$. On the other hand, it is observed from \cite{froncek2013group} that  $G$ is a $\mathbb{Z}_{n}$-distance magic does not imply that $G$ is distance magic.
\par In 2014, Cichacz \cite{cichacz2014distance,cichacz2014note} proved the following results.
\begin{theorem}\label{cichacz1}
	\textnormal{\cite{cichacz2014note}}	Let $G$ be a graph of order $n$ and $\Gamma$ be an arbitrary abelian group of order $4n$ such that $\Gamma\cong \mathbb{Z}_2\times \mathbb{Z}_2\times \mathcal{B}$ for some abelian group $\mathcal{B}$ on $n$ vertices. Then there exists a $\Gamma$-distance magic labeling for the graph $G\circ C_{4}$.
\end{theorem}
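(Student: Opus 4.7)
The plan is to construct an explicit $\Gamma$-distance magic labeling of $G \circ C_4$ by exploiting the $\mathbb{Z}_2 \times \mathbb{Z}_2$-factor of $\Gamma$. I would set $\iota := (1,1,e_{\mathcal B}) \in \Gamma$, where $e_{\mathcal B}$ denotes the identity of $\mathcal{B}$, so that $2\iota = e$, and then show that the map $\alpha \mapsto \iota - \alpha$ is a fixed-point-free involution of $\Gamma$: any fixed $\alpha = (a,b,c)$ would satisfy $(a,b,c) = (1+a, 1+b, -c)$, forcing $0 = 1$ in $\mathbb{Z}_2$. Hence $\Gamma$ splits into $2n$ disjoint pairs $\{\alpha, \iota - \alpha\}$, each summing to $\iota$. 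I would then group these $2n$ pairs arbitrarily into $n$ quadruples $Q_1, \ldots, Q_n$ of the form $Q_i = \{\alpha_i, \iota - \alpha_i, \beta_i, \iota - \beta_i\}$.

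Next, enumerating $V(G) = \{v_1, \ldots, v_n\}$ and writing the $i$-th $C_4$-fiber as $h_1^i h_2^i h_3^i h_4^i$ in cyclic order, I would define the labeling $l(v_i, h_1^i) = \alpha_i$, $l(v_i, h_2^i) = \beta_i$, $l(v_i, h_3^i) = \iota - \alpha_i$, $l(v_i, h_4^i) = \iota - \beta_i$, so that antipodal vertices in each fiber are paired elements summing to $\iota$. Since the $Q_i$'s partition $\Gamma$, this is a bijection onto $\Gamma$. For the weight at any $(v_i, h_j^i)$, I would split the sum into (a) its two cycle-neighbors within the fiber, which are antipodal to each other and therefore sum to $\iota$, and (b) the total of all labels over each fiber $v_k$ with $v_k \in N_G(v_i)$; since each such fiber sums to $2\iota = e$, part (b) contributes $e$. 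The weight at every vertex is thus $\iota + e = \iota$, a constant.

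The main conceptual point is to find a fixed-point-free pairing of $\Gamma$ whose pairs have a common sum. The naive pairing $\alpha \leftrightarrow -\alpha$ fails whenever $\Gamma$ has involutions, and here the $\mathbb{Z}_2 \times \mathbb{Z}_2$-factor already contributes three of them; translating by $\iota$ repairs this because $\iota - \alpha$ and $\alpha$ always differ in their first two coordinates. Once this pairing is in place, the antipodal placement within each $C_4$-fiber together with $2\iota = e$ forces the common magic constant $\iota$, with no further dependence on the structure of $G$.
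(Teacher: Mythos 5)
Your argument is correct and complete: $\iota=(1,1,e_{\mathcal B})$ satisfies $2\iota=e$, the map $\alpha\mapsto\iota-\alpha$ is a fixed-point-free involution (so $\Gamma$ really does split into $2n$ two-element pairs of common sum $\iota$), placing the two pairs of a quadruple antipodally in a $C_4$-fiber makes the two cycle-neighbours of every vertex sum to $\iota$, and every fiber sums to $2\iota=e$, so the outside contribution is $e$ regardless of $\deg_G(v_i)$ and the weight is constantly $\iota$. Note that the paper does not prove Theorem~\ref{cichacz1} at all --- it is quoted from Cichacz \cite{cichacz2014note} --- so the relevant comparison is with the paper's own constructions in Section~3 (Lemmas~\ref{lem1}, \ref{lem2}, Theorem~\ref{thmnew1}), which use the same underlying device: label twin/antipodal vertices $x_i^j$, $x_i^{j'}$ so that each pair has a prescribed constant sum and the fiber sums are controlled. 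The difference is that there, and in Cichacz's original proof, the construction is written via explicit coordinate formulas in $\mathbb{Z}_{2^s}\times\mathcal{A}$, where the pair-sum is generally \emph{not} an element of order two; the fiber sums then do not vanish, which is precisely why those results need degree congruences on $G$. Your packaging isolates the special feature of the $\mathbb{Z}_2\times\mathbb{Z}_2\times\mathcal{B}$ case --- one can take the pair-sum to be an involution, so fiber sums are $e$ and all dependence on $G$ disappears --- and it lets you distribute the pairs among the fibers arbitrarily rather than by a formula, giving a shorter verification and a transparent explanation of why no regularity or degree hypothesis is required.
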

\begin{theorem}
	\textnormal{\cite{cichacz2014note}}	Let $G$ be a graph of order $n$ and $\Gamma$ be an abelian group of order $4n$. If $n = 2^p(2k+1)$ for some natural numbers $p,k$ and $deg(v) \equiv c \mod 2^{p+1}$ for some constant $c$ for any $v\in V(G)$, then there exists a $\Gamma$-distance magic labeling for the graph $G\circ C_4$.
\end{theorem}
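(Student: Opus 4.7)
My approach is to construct an explicit $\Gamma$-distance magic labeling using the Sylow decomposition of $\Gamma$ together with the degree congruence. By the fundamental theorem of finite abelian groups, since $|\Gamma|=4n=2^{p+2}(2k+1)$ with $2k+1$ odd, we have $\Gamma\cong \Gamma_{2}\times \Gamma_{\mathrm{odd}}$ where $|\Gamma_{2}|=2^{p+2}$ and $|\Gamma_{\mathrm{odd}}|=2k+1$. The plan is to select a single element $Q\in\Gamma_{2}$ with $Q\notin 2\Gamma$, then pair up all elements of $\Gamma$ via the involution $\sigma(x)=Q-x$, and finally use these pairs to label opposite vertices of each $C_{4}$-fibre so that every pair sum equals $Q$.

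The first key observation is that $\Gamma_{2}\setminus 2\Gamma_{2}\neq \emptyset$: since $\Gamma_{2}$ is a non-trivial abelian $2$-group, $\Gamma_{2}[2]$ is non-trivial and hence $|2\Gamma_{2}|=|\Gamma_{2}|/|\Gamma_{2}[2]|<|\Gamma_{2}|$. Moreover, multiplication by $2$ is an automorphism of the odd-order group $\Gamma_{\mathrm{odd}}$, so $2\Gamma=2\Gamma_{2}\times\Gamma_{\mathrm{odd}}$, and any $Q\in\Gamma_{2}\setminus 2\Gamma_{2}$ therefore satisfies $Q\notin 2\Gamma$. Hence $2x=Q$ has no solution, the involution $\sigma$ is fixed-point-free, and $\Gamma$ splits into $2n$ unordered pairs $\{x,Q-x\}$, each of pair sum $Q$.

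For the construction, enumerate these pairs arbitrarily as $\{x_{1},Q-x_{1}\},\ldots,\{x_{2n},Q-x_{2n}\}$, fix any ordering $V(G)=\{g_{1},\ldots,g_{n}\}$, and write $V(C_{4})=\{h_{0},h_{1},h_{2},h_{3}\}$ in cyclic order. I would then define $l(g_{i},h_{0})=x_{2i-1}$, $l(g_{i},h_{2})=Q-x_{2i-1}$, $l(g_{i},h_{1})=x_{2i}$ and $l(g_{i},h_{3})=Q-x_{2i}$. This $l$ is a bijection $V(G\circ C_{4})\to \Gamma$; within each fibre the two opposite-vertex pairs both sum to $Q$, so the fibre total is $T(g_{i})=2Q$.

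For any vertex $(g,h)\in V(G\circ C_{4})$, one computes $w(g,h)=Q+\sum_{g'\in N_{G}(g)}T(g')=(1+2\deg_{G}(g))Q$, which is automatically independent of $h$ within each fibre. Writing $\deg_{G}(g)=c+2^{p+1}t_{g}$ using the hypothesis $\deg_{G}(g)\equiv c\pmod{2^{p+1}}$, we get $(1+2\deg_{G}(g))Q=(1+2c)Q+2^{p+2}t_{g}Q=(1+2c)Q$, since $Q\in\Gamma_{2}$ forces $2^{p+2}Q=0$. Thus every vertex has weight $\mu_{0}=(1+2c)Q$ and $l$ is $\Gamma$-distance magic. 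I expect no real computational obstacle: the conceptual crux is simply that the Sylow $2$-subgroup lets us choose a single $Q$ simultaneously realising both ``$Q\notin 2\Gamma$'' (so $\sigma$ is fixed-point-free and the pair partition exists) and ``$2^{p+2}Q=0$'' (so the degree congruence absorbs all variation across fibres).
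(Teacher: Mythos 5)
Your construction is correct: choosing $Q$ in the Sylow $2$-subgroup with $Q\notin 2\Gamma$ makes $x\mapsto Q-x$ a fixed-point-free involution, so $\Gamma$ splits into $2n$ pairs of sum $Q$; placing two such pairs on antipodal vertices of each $C_{4}$-fibre gives in-fibre neighbour sum $Q$ and fibre total $2Q$, whence $w(g,h)=(1+2\deg_G(g))Q$, and the hypothesis $\deg_G(g)\equiv c \pmod{2^{p+1}}$ together with $2^{p+2}Q=e$ (Lagrange in $\Gamma_2$) makes this constant. This is, however, a genuinely different route from the one used in the source of this theorem and mirrored in the present paper's own Section~3 results (Lemmas \ref{lem1} and \ref{lem2} and their corollaries): there one fixes an explicit direct-factor decomposition $\Gamma\cong\mathbb{Z}_{2^{s}}\times\mathcal{A}$, labels coordinate-wise so that twin (antipodal) vertices get labels summing to an element such as $(2^{s}-1,a_{0})$, and then runs a case analysis on how $s$ compares with the exponent coming from $n=2^{p}(2k+1)$, the degree congruence being invoked modulo $2^{s}$ or $2^{s-1}$ in the different cases. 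Your argument replaces the decomposition and casework by the single existence statement ``$\Gamma_2\smallsetminus 2\Gamma_2\neq\emptyset$'', which buys uniformity over all abelian $\Gamma$ of order $4n$ and a shorter verification; the coordinate-wise method buys explicit labels in each cyclic factor and is the template that extends in the paper to general balanced distance magic graphs $H$ on $2^{k}$ vertices, where a single pairing element no longer suffices to control the in-fibre sums of higher-degree vertices. One small presentational point: you should state explicitly that $\Gamma_2$ is non-trivial because $p\geq 0$ gives $|\Gamma_2|=2^{p+2}\geq 4$, which is what your choice of $Q$ relies on.
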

\begin{theorem}
	\textnormal{\cite{cichacz2014distance}}	Let $G$ be a graph of order $n$ and if $n = 2^p(2k+1)$ for some natural numbers $p,k$ and $deg(v) \equiv c \mod 2^{p+2}$ for some constant $c$ for any $v\in V(G)$, then there exists a $\Gamma$-distance magic labeling for the graph $G\times C_4$.
\end{theorem}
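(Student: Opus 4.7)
The plan is to build a balanced-type labeling: for each $g \in V(G)$ I would label the four vertices $(g,0), (g,1), (g,2), (g,3)$ of the fibre above $g$ by a quadruple $\{a_g,\, S - a_g,\, b_g,\, S - b_g\}$ of $\Gamma$, where $S \in \Gamma$ is a single carefully chosen element and the $n$ quadruples (one per $g$) partition $\Gamma$. In $G \times C_4$ with $V(C_4) = \{0, 1, 2, 3\}$ labelled cyclically, the neighbourhoods in $C_4$ satisfy $N_{C_4}(0) = N_{C_4}(2) = \{1, 3\}$ and $N_{C_4}(1) = N_{C_4}(3) = \{0, 2\}$, so the weights split as
\[
w(g, 0) = w(g, 2) = \sum_{g' \in N_G(g)} \bigl(l(g', 1) + l(g', 3)\bigr),
\]
\[
w(g, 1) = w(g, 3) = \sum_{g' \in N_G(g)} \bigl(l(g', 0) + l(g', 2)\bigr).
\]
If my labeling has $l(g,0) + l(g,2) = l(g,1) + l(g,3) = S$ for every $g$, each weight collapses to $\deg(g) \cdot S$.

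Next I would choose $S$ so that $\deg(g) \cdot S$ is independent of $g$. Using the hypothesis $\deg(g) \equiv c \pmod{2^{p+2}}$ it suffices to force $2^{p+2} S = e$. Writing $\Gamma \cong P \times H$ with $P$ the Sylow $2$-subgroup (of order $2^{p+2}$) and $H$ the odd-order subgroup (of order $2k+1$), the condition $2^{p+2} S = e$ is equivalent to $S \in P$, and then $\deg(g) \cdot S = cS$, the prospective magic constant.

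To produce the quadruple partition, I would pair $\Gamma$ via the involution $x \mapsto S - x$; this is fixed-point-free iff $2x = S$ has no solution, i.e., $S \notin 2\Gamma$. Since doubling is an automorphism of the odd-order group $H$, one computes $2\Gamma = 2P \times H$, so $P \cap 2\Gamma = 2P$, and the combined requirement $S \in P$ together with $S \notin 2\Gamma$ reduces to $S \in P \setminus 2P$. By the structure theorem, a nontrivial finite abelian $2$-group satisfies $2P \subsetneq P$, so such an $S$ exists. Fixing it, the map $x \mapsto S - x$ partitions $\Gamma$ into $2n$ unordered pairs, which I would amalgamate arbitrarily into $n$ quadruples $\{a_g,\, S - a_g,\, b_g,\, S - b_g\}$, one per $g$, and set $l(g, 0) = a_g$, $l(g, 2) = S - a_g$, $l(g, 1) = b_g$, $l(g, 3) = S - b_g$. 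The resulting $l$ is a bijection $V(G) \times V(C_4) \to \Gamma$ with every weight equal to $cS$.

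The main obstacle is reconciling two competing constraints on $S$: the degree hypothesis pushes $S$ inside $P$ (so that $2^{p+2}$-periodicity in the degrees is absorbed and only the residue $c$ survives), while non-degeneracy of the quadruples pushes $S$ outside $2P$ (so that $x \neq S - x$ everywhere on $\Gamma$). The structural fact $2P \subsetneq P$ is precisely what lets both be arranged, and this is where the strength of the modulus $2^{p+2}$ in the hypothesis is spent.
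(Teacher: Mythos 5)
Your construction is correct: the fibre sums $l(g,0)+l(g,2)=l(g,1)+l(g,3)=S$ collapse every weight in $G\times C_4$ to $\deg_G(g)\cdot S$, the choice $2^{p+2}S=e$ absorbs the degree congruence so all weights equal $cS$, and your existence argument for $S\in P\setminus 2P$ (Sylow $2$-subgroup $P$ of order $2^{p+2}$, $2\Gamma=2P\times H$, $2P\subsetneq P$) does guarantee that $x\mapsto S-x$ is a fixed-point-free involution, so the $2n$ pairs can be amalgamated into $n$ quadruples giving a genuine bijection. The route, however, is genuinely different from the one used here and in Cichacz's original argument: the standard proofs fix an explicit isomorphism $\Gamma\cong\mathbb{Z}_{2^{s}}\times\mathcal{A}$ supplied by the fundamental theorem, write the labels in coordinates (twin pairs summing to $(2^{s}-1,a_0)$), and run a case analysis over the possible $2$-power factors $\mathbb{Z}_{2^{s}}$ — exactly the pattern of Lemmas \ref{lem1} and \ref{lem2} in this paper. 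Your coordinate-free choice of a single element $S$ of maximal ``$2$-adic depth'' replaces that case analysis by one structural fact and treats all abelian groups of order $4n$ uniformly; it also makes transparent where the modulus $2^{p+2}$ is spent. What the coordinate approach buys in return is explicit labelings and magic constants, and it adapts directly to the lexicographic product, where edges inside each fibre $H_i$ force one to control the within-fibre label sums as well — a constraint your pairing argument does not have to face in the direct product, since $G\times C_4$ has no edges inside a fibre.
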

\par Cichacz in \textnormal{\cite{cichacz2014notebi}} gave a complete characterization of group distance magicness of the complete bipartite graph.
\begin{theorem}\textnormal{\cite{cichacz2014notebi}}\label{bipart}
	The complete bipartite graph $K_{m,n}$ is a group distance magic graph if and only if $m + n \not\equiv2 \mod 4.$
\end{theorem}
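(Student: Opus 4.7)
The approach is to reduce each direction to a subset-sum question in $\Gamma$.

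\emph{Necessity.} Let $A$ and $B$ be the parts of $K_{m,n}$ of sizes $m$ and $n$. Since $N(u)=B$ for all $u\in A$ and $N(v)=A$ for all $v\in B$, a $\Gamma$-distance magic labeling $l$ with magic constant $\mu_0$ forces $\sum_{v\in B}l(v)=\sum_{u\in A}l(u)=\mu_0$; summing gives $s(\Gamma)=2\mu_0$. Suppose $m+n\equiv 2\pmod 4$. Every abelian group of such order has the form $\mathbb Z_2\times H$ with $|H|$ odd, hence a unique involution $g'=(1,0)$; by Lemma \ref{grp1}(i), $s(\Gamma)=g'$. The doubling map on $\mathbb Z_2\times H$ has image $\{0\}\times H$, which misses $g'$, so $2\mu_0=g'$ has no solution, a contradiction.

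\emph{Sufficiency.} For $|\Gamma|=m+n\not\equiv 2\pmod 4$ I would exhibit, for each abelian $\Gamma$ of this order, an $m$-subset $L\subseteq \Gamma$ with $\sum L=\mu_0$ for some $\mu_0$ satisfying $2\mu_0=s(\Gamma)$; then $R:=\Gamma\setminus L$ automatically has size $n$ and $\sum R=\mu_0$, yielding the desired labeling by assigning $L$ and $R$ to the two parts. An admissible $\mu_0$ exists by Lemma \ref{grp1}: if $|\Gamma|$ is odd, or if $\Gamma$ has more than one involution, then $s(\Gamma)=e$ and $\mu_0=e$ works; if $\Gamma$ has exactly one involution then $|\Gamma|\equiv 0\pmod 4$, forcing the Sylow $2$-subgroup to be cyclic $\mathbb Z_{2^a}$ with $a\geq 2$, and the involution $(2^{a-1},0)$ admits the square root $\mu_0=(2^{a-2},0)$. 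To construct $L$, I would decompose $\Gamma=\{e\}\sqcup I\sqcup\bigsqcup_i\{g_i,-g_i\}$ and note that any complete inverse pair added to $L$ changes $|L|$ by $2$ without affecting $\sum L$. Hence the task reduces to producing a small \emph{seed} $L_0$ of the correct sum $\mu_0$ and of size congruent to $m$ modulo $2$, which I then extend by complete inverse pairs until $|L|=m$.

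\emph{Main obstacle.} The core technical step is constructing the seed. When $|I|\geq 3$, the set $I\cup\{e\}$ is an $\mathbb F_2$-vector space of dimension $\geq 2$, providing seeds of either parity summing to any prescribed target. The subcase with a single involution is more delicate: no subset of $I\cup\{e\}$ sums to the required $\mu_0=(2^{a-2},0)$, so I must split the inverse pair $\{\mu_0,-\mu_0\}$, placing $\mu_0$ in $L$ alone or together with $(0,0)$ (for odd- or even-parity seeds), and optionally including $g'$ while swapping $\mu_0\leftrightarrow -\mu_0$ to adjust the parity. A short case analysis on the parities of $m$ and $n$ and on the $2$-rank of $\Gamma$ verifies that every $m\in\{1,\dots,|\Gamma|-1\}$ is attained; this bookkeeping is the main burden but uses no new ideas beyond the inverse-pair decomposition and the square-root structure of $\mu_0$.
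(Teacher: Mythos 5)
First, note that the paper does not prove this statement at all: it is quoted from Cichacz's paper \cite{cichacz2014notebi} as a known result, so there is no internal proof to compare against; I can only assess your argument on its own terms (it does follow the same general route as Cichacz's original one: partition $\Gamma$ into an $m$-set and an $n$-set of equal sum, the obstruction being solvability of $2x=s(\Gamma)$). Your necessity direction is complete and correct: for $|\Gamma|\equiv 2\pmod 4$ the group is $\mathbb{Z}_2\times H$ with $|H|$ odd, $s(\Gamma)$ is the unique involution, and $2\Gamma=\{0\}\times H$ misses it, so $s(\Gamma)=2\mu_0$ is impossible. The reduction of sufficiency to finding an $m$-subset $L$ with $2\sum L=s(\Gamma)$ is also the right reduction.

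The genuine soft spot is in the sufficiency bookkeeping, and it is concrete. In the multi-involution case you fix the target $\mu_0=e$, but that specific target is not always attainable: for $\Gamma=\mathbb{Z}_2^k$ and $m=2$ no $2$-subset sums to $e$ (two distinct elements of an elementary abelian $2$-group never cancel), yet $K_{2,2^k-2}$ is $\Gamma$-distance magic because \emph{any} $2$-subset works with a nonzero magic constant — here every element satisfies $2x=s(\Gamma)=e$, so the target must be allowed to range over all solutions of $2x=s(\Gamma)$, not be pinned to $e$. Relatedly, your "extend by complete inverse pairs" step silently assumes a supply of non-involution pairs; this supply is exactly what shrinks (to zero, for $\mathbb{Z}_2^k$) when the $2$-rank is large, so the case analysis you defer is not pure parity bookkeeping: it must treat the seed size ranging up to $|L|$ itself when few or no pairs exist, and must let the admissible $\mu_0$ vary with $m$. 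Both issues are repairable inside your framework (for high $2$-rank the problem in fact trivializes, since $\sum R=s(\Gamma)-\sum L$ equals $\sum L$ automatically when every element is $2$-torsion and $s(\Gamma)=e$), and your single-involution analysis, splitting the pair $\{\mu_0,-\mu_0\}$ with $\mu_0=(2^{a-2},0)$, is sound; but as written the claim that "every $m\in\{1,\dots,|\Gamma|-1\}$ is attained" with the stated seeds and fixed target is false, so the sufficiency argument is not yet a proof.
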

\par Recently, Anholcer $et$ $al.$\cite{anholcer2014group} discussed the group distance magicness of the direct product of two graphs and obtained the following result.
\begin{theorem}\label{directproduct}
	\textnormal{\cite{anholcer2014group}}
	If $G$ is a balanced distance magic graph and $H$ an $r$-regular graph for $r\geq1$, then $G \times H$ is a group distance magic graph.
\end{theorem}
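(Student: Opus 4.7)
My plan is to exploit two structural ingredients simultaneously: the direct-product neighborhood formula $N_{G\times H}((g,h))=N_G(g)\times N_H(h)$, and the twin involution on $V(G)$ coming from balance. Since $G$ is balanced-$dmg$, its vertex set partitions as $\{v_i,v_i':1\le i\le n/2\}$ with $N_G(v_i)=N_G(v_i')$; a short consequence is that each neighborhood $N_G(v_i)$ is itself closed under the twin map (if $u\in N_G(v_i)$ and $u'$ is its twin, then $v_i\in N_G(u)=N_G(u')$, hence $u'\in N_G(v_i)$), so $N_G(v_i)$ is a disjoint union of twin pairs and the common degree $k$ of $G$ is even.

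Guided by this, I would aim to build a bijection $l:V(G\times H)\to\Gamma$ having the single property that for a fixed element $\alpha\in\Gamma$,
\[
l(v_i,h)+l(v_i',h)=\alpha\qquad\text{for every $i$ and every $h\in V(H)$.}
\]
Granted such $l$, splitting the neighbors of $(v_i,h)$ into the $k/2$ twin pairs inside $N_G(v_i)$ gives
\[
w_{G\times H}((v_i,h))=\sum_{h'\in N_H(h)}\sum_{\{u,u'\}\subset N_G(v_i)}\bigl[l(u,h')+l(u',h')\bigr]=r\cdot\tfrac{k}{2}\alpha,
\]
a constant independent of the chosen vertex; the identity $N_G(v_i')=N_G(v_i)$ yields the same value at $(v_i',h)$, so $l$ will be $\Gamma$-distance magic with constant $\mu_0=\tfrac{kr}{2}\alpha$.

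The construction of $l$ is now a purely group-theoretic question: partition $\Gamma$ into $|\Gamma|/2$ unordered pairs each summing to $\alpha$, and then spread these pairs arbitrarily over the $(n/2)\cdot|V(H)|=|\Gamma|/2$ twin-columns $\{(v_i,h),(v_i',h)\}$ of $V(G\times H)$. Such a partition exists precisely when the involution $x\mapsto\alpha-x$ on $\Gamma$ is fixed-point-free, equivalently when $\alpha\notin 2\Gamma$. Since $n$ is even, $|\Gamma|=n|V(H)|$ is even, so Cauchy's theorem produces an involution in $\Gamma$, the kernel of the doubling endomorphism has order at least $2$, and therefore $2\Gamma$ is a proper subgroup; any $\alpha\in\Gamma\setminus 2\Gamma$ works. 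The main (and essentially the only genuine) obstacle is this last existence step, because $\Gamma$ is arbitrary and its invariant factors are not specified, but the evenness of $|\Gamma|$ that comes from balance makes it immediate. No property of $H$ beyond $r$-regularity enters.
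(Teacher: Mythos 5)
Your argument is correct, and it is worth noting at the outset that the paper you were given does not actually prove this statement: Theorem \ref{directproduct} is quoted from \cite{anholcer2014group} and used as a black box, so there is no internal proof to compare against. Judged on its own, your proof is complete: the twin-closure of each neighbourhood $N_G(v_i)$ (hence evenness of the common degree $k$), the reduction of the weight of any vertex $(g,h)$ to $r\cdot\tfrac{k}{2}\alpha$ via the product neighbourhood $N_G(g)\times N_H(h)$, and the existence of $\alpha\notin 2\Gamma$ (doubling has nontrivial kernel because $|\Gamma|=n|V(H)|$ is even, $n$ being even by the twin partition) so that $x\mapsto\alpha-x$ is a fixed-point-free involution partitioning $\Gamma$ into $|\Gamma|/2$ pairs of sum $\alpha$, are all sound, and distributing these pairs over the twin-columns $\{(v_i,h),(v_i',h)\}$ gives the required bijection for an arbitrary abelian $\Gamma$. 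What your route buys, compared with the constructions this paper does carry out in Section~3 (and with the related results of Cichacz it builds on), is uniformity and economy: those arguments invoke the structure theorem to write $\Gamma\cong\mathbb{Z}_{2^s}\times\mathcal{A}$ and then hand-craft coordinatewise labelings with degree congruence hypotheses, whereas your labeling never looks inside $\Gamma$ beyond the single fact that the doubling endomorphism is not surjective on a group of even order, and it imposes no condition on $H$ beyond regularity. One small presentational point: you should say explicitly that regularity of $G$ (part of the quoted characterization of balanced-dmg graphs) is what makes $\tfrac{k}{2}$ independent of the vertex, since the twin-pair decomposition alone would only give a weight of $r\cdot\tfrac{\deg_G(g)}{2}\alpha$.
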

\par In the following section, we provide some necessary conditions for a graph to be group distance magic and also characterize the group distance magic labeling of a tree $T$. Further, we also discuss the group distance magic labeling of bi-regular graphs $G$ with regularities $r_{1}=|V(G)|-1$ and $r_{2}= 1,2,3,|V(G)|-2$ or $|V(G)|-3$.
\par Motivated by the results from \cite{cichacz2014distance,cichacz2014note,anholcer2014group}, in third section, we discuss the group distance magic labeling of $G\circ H$ and $G\times H$, where $G$ is a non-regular graph and $H$ is a balanced-$dmg$.
\section{Group distance magic labeling of $(K_{n-1}-M)+K_{1}$ and trees}

\begin{lemma}
	If $\Gamma$ is any abelian group and if $G$ is a graph with $|\Gamma|$ vertices such that $G$ has at least two distinct vertices of degree $|\Gamma|-1$, then $G$ is not $\Gamma$-distance magic.
\end{lemma}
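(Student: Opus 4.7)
The plan is to derive an immediate contradiction by comparing the weights of the two universal vertices. Suppose $u$ and $v$ are distinct vertices of $G$ with $\deg(u) = \deg(v) = |\Gamma|-1$, and suppose for contradiction that $l : V(G) \to \Gamma$ is a $\Gamma$-distance magic labeling with magic constant $\mu_0$.

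The first step is to identify the neighborhoods explicitly: since $|V(G)| = |\Gamma|$ and $\deg(u) = \deg(v) = |V(G)| - 1$, we have $N(u) = V(G) \setminus \{u\}$ and $N(v) = V(G) \setminus \{v\}$. The second step is to exploit bijectivity of $l$: because $l$ is a bijection from $V(G)$ onto $\Gamma$, we have
\[
\sum_{x \in V(G)} l(x) \;=\; \sum_{g \in \Gamma} g \;=\; s(\Gamma).
\]
The third step is to compute the two weights directly from this: $w_G(u) = s(\Gamma) - l(u)$ and $w_G(v) = s(\Gamma) - l(v)$. Setting both weights equal to $\mu_0$ and cancelling $s(\Gamma)$ yields $l(u) = l(v)$, which contradicts $u \neq v$ together with the injectivity of $l$.

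There is essentially no serious obstacle here; the argument is a one-line counting observation once the universal neighborhoods are recognized. Notably, the proof does not use any special structural property of $\Gamma$ (such as existence or number of involutions) beyond the abelian group axioms needed to make sense of the sum $s(\Gamma)$, so the conclusion holds uniformly across every abelian group of order $|V(G)|$. The only mild subtlety worth flagging is that we do not need to know the value of $s(\Gamma)$ (which depends on the involutions of $\Gamma$ via Lemma~\ref{grp1}); we only need that this sum is a fixed element of $\Gamma$, so that it cancels in the comparison $w_G(u) = w_G(v)$.
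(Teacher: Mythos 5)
Your proof is correct and follows essentially the same approach as the paper: both arguments observe that a vertex of degree $|\Gamma|-1$ has weight $s(\Gamma)-l(u)$, equate the two weights to $\mu_0$, and cancel to force $l(u)=l(v)$, contradicting injectivity. The only cosmetic difference is that the paper writes the relation as $s(\Gamma)=\mu_0+l(u)=\mu_0+l(v)$ and invokes cancellation in $\Gamma$ directly.
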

\begin{proof}
	On the contrary, let $l$ be a $\Gamma$-distance magic labeling of the graph $G$ with magic constant $\mu_{0} $.  Let $u$ and $v$ be two distinct vertices of $G$ such that  $deg_{G}(u)=|\Gamma|-1=deg_{G}(v)$. Then,
	\begin{equation*}
	s(\Gamma) = \mu_{0}+l(u) = \mu_{0}+l(v).
	\end{equation*}
	\par Since the left cancellation law holds in $\Gamma$, we have $l(u)=l(v)$, a contradiction.
\end{proof}
\begin{lemma}\label{nbd2}
	If $\Gamma$ is any abelian group and if $G$ is a graph with $|\Gamma|$ vertices such that $G$ has two distinct vertices $u$ and $v$ with $|N_G(u)\cap N_G(v)|= deg_G(u)-1=deg_G(v)-1$, then $G$ is not $\Gamma$-distance magic.
\end{lemma}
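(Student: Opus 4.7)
The plan is to mimic the classical proof of the Miller–Ryan–Smith–Sugeng lemma (Lemma 1.2 of the excerpt), replacing integer cancellation by the cancellation law in the abelian group $\Gamma$.

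First, I would argue by contradiction: assume $l : V(G) \to \Gamma$ is a $\Gamma$-distance magic labeling with magic constant $\mu_0$, and let $u, v$ be the two vertices satisfying the hypothesis. Writing $S = N_G(u) \cap N_G(v)$, the hypothesis gives $|S| = \deg_G(u) - 1 = \deg_G(v) - 1$, so both $N_G(u) \setminus S$ and $N_G(v) \setminus S$ consist of exactly one vertex. Call these $x$ and $y$, respectively. A small but crucial observation is that $x \neq y$: if $x = y$, then $x$ would lie in both $N_G(u)$ and $N_G(v)$, hence in $S$, contradicting $x \notin S$. (This unifies the case $u \sim v$, where necessarily $x = v$ and $y = u$, with the case $u \not\sim v$.)

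Next, I would compute the two weights in the abelian group $\Gamma$:
\begin{equation*}
\mu_0 = w_G(u) = \sum_{z \in S} l(z) + l(x), \qquad \mu_0 = w_G(v) = \sum_{z \in S} l(z) + l(y).
\end{equation*}
Equating the right-hand sides and applying the left-cancellation law in $\Gamma$ to the common sum $\sum_{z \in S} l(z)$ yields $l(x) = l(y)$, which contradicts the injectivity of the bijection $l$.

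Since every step is essentially forced, there is no real obstacle; the one subtlety worth stating cleanly in the write-up is the verification that $x \neq y$, which is what makes the single-line cancellation argument apply uniformly whether or not $u$ and $v$ are adjacent. The proof transfers from $\mathbb{Z}$ to an arbitrary abelian group precisely because cancellation (not total ordering or the integers $\{1,\ldots,n\}$) is the only algebraic fact used.
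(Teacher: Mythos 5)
Your proposal is correct and follows essentially the same route as the paper: compare the weights of $u$ and $v$, cancel the common sum over $N_G(u)\cap N_G(v)$ in $\Gamma$, and contradict the injectivity of $l$. Your explicit verification that the two leftover neighbours are distinct (covering the case $u\sim v$) is a point the paper leaves implicit, but it is the same argument.
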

\begin{proof}
	On the contrary, let $l$ be a $\Gamma$-distance magic labeling of the graph $G$ with magic constant $\mu_{0} $. Choose two vertices $u'$ and $v'$ of $G$ in such a way that the followings hold:
	\begin{enumerate}
		\item[(i)] $u'\in N_G(u)$ and $u'\not\in N_G(v)$, and
		\item[(ii)] $v'\in N_G(v)$ and $v'\not\in N_G(u)$.
	\end{enumerate}
	\par By comparing the weights of $u$ and $v$, we have
	\begin{equation*}
	\mu_{0} = w_{G}(u)= g+l(u') = g+l(v') = w_{G}(v),
	\end{equation*} for some $g$ in $\Gamma.$  By left cancellation law in $\Gamma$, $l(u') = l(v')$, a contradiction.
\end{proof}
\par The following result characterizes the group distance magicness of a tree.
\begin{theorem}\label{treegrp}
	A non-trivial tree $T$ is $\Gamma$-distance magic for an abelian group $\Gamma$ if and only if $T\cong K_{1,n}$, with $n\not\equiv 1\mod 4$.
\end{theorem}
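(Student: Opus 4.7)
The plan is to exploit two simple consequences of the labeling equation: (i) at any leaf $u$ of $T$ with neighbor $w$, the weight equation $w_T(u) = l(w)$ forces $l(w) = \mu_0$; and (ii) for the special case $T = K_{1,n}$, summing weights yields the global constraint $2\mu_0 = s(\Gamma)$, since $l(v) + \sum_i l(v_i) = \mu_0 + \mu_0 = s(\Gamma)$.

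For the sufficiency direction, I would take $T = K_{1,n}$ with $n \not\equiv 1 \pmod 4$ and build $\Gamma$ together with an explicit labeling. The constraint $2\mu_0 = s(\Gamma)$ is easiest to satisfy when $s(\Gamma) = e$, which by Lemma \ref{grp1} holds as soon as $\Gamma$ has either no involutions or at least three of them. Accordingly I would choose $\Gamma = \mathbb{Z}_{n+1}$ when $n+1$ is odd, and $\Gamma = \mathbb{Z}_2 \times \mathbb{Z}_2 \times \mathcal{B}$ (with $|\mathcal{B}| = (n+1)/4$) when $n+1 \equiv 0 \pmod 4$. Setting $l(v) = e$ at the center $v$ and bijectively labeling the leaves by $\Gamma \setminus \{e\}$, both the leaf weights and the center weight evaluate to $e$.

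For the necessity direction, I would first reduce $T$ to a star. Applying (i) to every leaf of $T$ shows that all leaves share a single neighbor $v$, because each of their unique neighbor-labels equals $\mu_0$ and $l$ is a bijection. To upgrade this to $T \cong K_{1,n}$, I would rule out any internal vertex $w \neq v$: rooting $T$ at $v$, such a $w$ must have a descendant that is a leaf of $T$ at distance $\geq 2$ from $v$, contradicting the previous conclusion. Next I would eliminate $n \equiv 1 \pmod 4$. The constraint in (ii) forces $s(\Gamma)$ to lie in the image $2\Gamma$ of the doubling map. If $n+1 \equiv 2 \pmod 4$, the fundamental theorem of abelian groups gives $\Gamma \cong \mathbb{Z}_2 \times H$ with $|H|$ odd; then Lemma \ref{grp1}(i) pins $s(\Gamma) = (1, e_H)$, while doubling is a bijection on $H$ (odd order), so $2\Gamma = \{0\} \times H$ misses $(1, e_H)$. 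This arithmetic mismatch between $s(\Gamma)$ and $2\Gamma$ is the only delicate step; the star reduction and the labeling constructions are routine once the identity $2\mu_0 = s(\Gamma)$ is in hand.
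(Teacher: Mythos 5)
Your necessity argument is correct, and it is in fact a self-contained alternative to the paper's route: the leaf observation (each leaf's neighbour must carry the label $\mu_0$, so by injectivity of $l$ all leaves hang on a single vertex, forcing $T\cong K_{1,n}$) replaces the paper's appeal to Lemma \ref{nbd2} applied to the two ends of a diametral path, and your identity $2\mu_0=s(\Gamma)$ combined with $\Gamma\cong\mathbb{Z}_2\times H$, $|H|$ odd, and Lemma \ref{grp1}(i) correctly excludes $n\equiv 1\pmod 4$ without invoking the bipartite characterization (Theorem \ref{bipart}), which is what the paper cites for the star case. Moreover your version of necessity is the strong one the paper implicitly uses: no abelian group at all works for a non-star tree or for $K_{1,n}$ with $n\equiv1\pmod 4$.

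The gap is in sufficiency. This theorem is the paper's characterization of \emph{group} distance magicness (see the sentence introducing it and the abstract), so for $n\not\equiv1\pmod 4$ you must exhibit a $\Gamma$-distance magic labeling of $K_{1,n}$ for \emph{every} abelian group $\Gamma$ of order $n+1$, not for one group of your own choosing. Your construction fixes $\Gamma=\mathbb{Z}_{n+1}$ (odd case) or $\Gamma=\mathbb{Z}_2\times\mathbb{Z}_2\times\mathcal{B}$ (case $4\mid n+1$) and labels the centre with $e$; this works only when $s(\Gamma)=e$, and fails for instance for $\mathbb{Z}_4$, $\mathbb{Z}_8$ or $\mathbb{Z}_4\times\mathbb{Z}_3$ --- any $\Gamma$ with $4\mid|\Gamma|$ and cyclic Sylow $2$-subgroup --- where $s(\Gamma)$ is the unique involution, so the centre weight $s(\Gamma)-e\neq e$ does not match the leaf weight $e$. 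The repair is short and already implicit in your obstruction: $K_{1,n}$ is $\Gamma$-distance magic precisely when $s(\Gamma)\in 2\Gamma$ (label the centre by $g$ with $2g=s(\Gamma)$ and the leaves arbitrarily), and $s(\Gamma)\in 2\Gamma$ holds for every $\Gamma$ with $|\Gamma|\not\equiv 2\pmod 4$: if $\Gamma$ has no involution or more than one, then $s(\Gamma)=e=2e$ by Lemma \ref{grp1}(ii); if it has a unique involution, then its Sylow $2$-subgroup is $\mathbb{Z}_{2^k}$ with $k\geq 2$ and the involution is $2\bigl(2^{k-2},e\bigr)$. With that amendment (which also covers non-cyclic odd-order groups, omitted in your odd case), your argument becomes a complete and genuinely different proof of the theorem.
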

\begin{proof}
	If $diam(T)=2$, the result is straightforward by Theorem \ref{bipart}. On the other hand if $diam(T)>2$, then $T$ has two vertices $u$ and $v$ such that $d_{G}(u,v)=diam(T)$. Hence the result follows from Lemma \ref{nbd2}.
\end{proof}
\begin{theorem}
	Let $\Gamma$ be any abelian group having atleast one element $g^\dagger$ such that $g^\dagger$ is not an involution. If~~$ l_{1}$ is a $\Gamma$-distance magic labeling of $G$ with magic constant $\mu_{0}$, then there exists a $\Gamma$-distance magic labeling $l_{2}$ of $G$ with magic constant $-\mu_{0}$.
\end{theorem}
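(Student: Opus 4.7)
The plan is to construct $l_2$ directly by pointwise negation. Define $l_2\colon V(G)\to\Gamma$ by $l_2(v)=-l_1(v)$ for every $v\in V(G)$. Since the additive inversion map $x\mapsto -x$ is an automorphism of the abelian group $\Gamma$ (in particular a bijection), and $l_1$ is a bijection by hypothesis, the composition $l_2$ is again a bijection from $V(G)$ onto $\Gamma$, and hence a valid labeling.

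To verify the magic condition, I would compute, for an arbitrary vertex $u$ of $G$,
\begin{equation*}
\sum_{v\in N_G(u)} l_2(v)=\sum_{v\in N_G(u)}\bigl(-l_1(v)\bigr)=-\sum_{v\in N_G(u)} l_1(v)=-\mu_0,
\end{equation*}
where the middle equality uses commutativity of $+$ in $\Gamma$ together with the fact that negation distributes over finite sums in an abelian group. Thus $l_2$ is a $\Gamma$-distance magic labeling of $G$ with magic constant $-\mu_0$, which is exactly what was to be proven.

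There is no substantive obstacle here; the proof is essentially a one-line construction, and the only point to watch is that bijectivity is preserved under negation, which is immediate. The role of the hypothesis that $\Gamma$ possesses a non-involution $g^\dagger$ is to guarantee that the constructed labeling $l_2$ is genuinely distinct from $l_1$: at the vertex $v_0$ with $l_1(v_0)=g^\dagger$ one has $l_2(v_0)=-g^\dagger\neq g^\dagger=l_1(v_0)$. Without this hypothesis every non-identity element of $\Gamma$ would be an involution, forcing $-\mu_0=\mu_0$ and $l_2=l_1$, so that the theorem would reduce to the existence of $l_1$ itself and carry no new information.
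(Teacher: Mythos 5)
Your construction $l_2=-l_1$, the bijectivity check, the weight computation giving $-\mu_0$, and your observation that the non-involution hypothesis only serves to make $l_2$ genuinely different from $l_1$ are exactly the paper's argument. Correct, and essentially the same proof.
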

\begin{proof}
	Let $l_1$ be the $\Gamma$-distance magic labeling of a graph $G$ with magic constant $\mu_{0}.$ Let $u$ be any vertex of $G$ with the neighbors $v_1, v_2,...,v_t$, where $t=|N_G(u)|$. Then,
	$$\displaystyle w_{G}(u) = \sum_{i=1}^{t}l_{1}(v_{i}) =\mu_{0}.$$
	\par Consider a new function $l_{2}(u) =-(l_{1}(u))$. Since $l_{1}$ is a bijection, $l_{2}$ is also a bijection. Further, $l_{1}$ is not identically equal to $l_{2}$ because if $l_{1}(v)=l_{2}(v)$ for all $v$, in particular, if $l_{1}(v^\dagger)  = l_{2}(v^\dagger) = g^\dagger$, then $l_{1}(v^\dagger)  = -l_{1}(v^\dagger) \implies 2l_{1}(v^\dagger) = e$, that is $2g^\dagger = e$, a contradiction.
	\par  In an abelian group, the inverse of sum of elements is equal to the sum of inverses of each elements. Thus, the weight of $u$ of $G$ with respect to $l_{2}$ is given by $$\displaystyle \sum_{i=1}^{t}l_{2}(v_{i})=\sum_{i=1}^{t}-l_{1}(v_{i})=-\biggl(\sum_{i=1}^{t}l_{1}(v_{i})\biggr) =-\mu_0.$$
	Since the choice of $u$ of $G$ is arbitrary, the result follows.
\end{proof}
\begin{theorem}
	Let $n>1$ be an odd integer. Let $G$ be a graph isomorphic to $(K_{n-1}-M)+K_1 $, where $M$ is any perfect matching of $K_{n-1}$. If $\Gamma$ is any abelian group with $|\Gamma|=n$, then $G$ admits a $\Gamma$-distance magic labeling $l$ if and only if $l(v_0)= e$, where $G[\{v_0\}] = K_1$.
\end{theorem}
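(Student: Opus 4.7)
The plan is to exploit the very rigid structure of $G = (K_{n-1}-M) + K_1$: write the non-apex vertices as matching pairs $\{u_i, u_i'\}$ for $1 \le i \le (n-1)/2$, so that the neighborhood of each $u_i$ is $V(G) \setminus \{u_i, u_i'\}$, while the neighborhood of $v_0$ is $V(G) \setminus \{v_0\}$. Since $n$ is odd, $\Gamma$ has no involutions, so Lemma \ref{grp1}(ii) gives $s(\Gamma) = e$. This identity is the engine of the whole proof.

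For the forward direction, suppose $l$ is a $\Gamma$-distance magic labeling with constant $\mu_0$. From the neighborhood descriptions,
\[
w_G(v_0) = s(\Gamma) - l(v_0) = -l(v_0) = \mu_0, \qquad w_G(u_i) = s(\Gamma) - l(u_i) - l(u_i') = -l(u_i) - l(u_i') = \mu_0.
\]
Comparing these yields $l(u_i) + l(u_i') = l(v_0)$ for every matching pair. Summing this identity over all $(n-1)/2$ pairs gives $-l(v_0) = s(\Gamma) - l(v_0) = \tfrac{n-1}{2}\,l(v_0)$, hence $\tfrac{n+1}{2}\,l(v_0) = e$. Therefore the order of $l(v_0)$ divides $(n+1)/2$, and by Lagrange it also divides $n$. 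If $d$ divides both $n$ and $(n+1)/2$, then $d \mid n+1$ and so $d \mid \gcd(n,n+1) = 1$; this forces the order of $l(v_0)$ to be $1$, i.e.\ $l(v_0) = e$.

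For the converse, assume we are required to set $l(v_0) = e$. Because $\Gamma$ has no involutions, the $n-1$ non-identity elements of $\Gamma$ partition into $(n-1)/2$ inverse pairs $\{g,-g\}$. Assigning one such pair to each matching edge, say $l(u_i) = g_i$ and $l(u_i') = -g_i$, defines a bijection $l : V(G) \to \Gamma$. Plugging into the formulas above gives $w_G(v_0) = -l(v_0) = e$ and $w_G(u_i) = -(l(u_i)+l(u_i')) = e$, so $l$ is a $\Gamma$-distance magic labeling with $\mu_0 = e$.

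The principal step is the forward direction; the crux is translating the magic condition at the apex and at each $u_i$ into the single constraint $l(u_i)+l(u_i') = l(v_0)$, and then extracting $l(v_0)=e$ via the coprimality of $n$ and $(n+1)/2$. The converse is a straightforward verification once one notices that non-involution pairing is available precisely when $n$ is odd, which is the same parity hypothesis driving the whole theorem.
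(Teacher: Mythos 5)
Your proof is correct, and the converse direction coincides with the paper's (pair each matching edge with an inverse pair $\{g,-g\}$, which exists because an odd-order group has no involutions). The forward direction, however, takes a slightly different route. The paper argues globally in one stroke: summing the weights of all vertices, each label is counted with multiplicity equal to the degree of its vertex, giving $n\mu_{0}=(n-1)l(v_{0})+(n-2)\bigl(s(\Gamma)-l(v_{0})\bigr)$; since $n\mu_{0}=e$ by Lagrange and $s(\Gamma)=e$ by Lemma \ref{grp1}, this collapses immediately to $l(v_{0})=e$. You instead compare the weight at the apex with the weight at each matching pair to get the local identities $l(u_{i})+l(u_{i}')=l(v_{0})$, sum them to obtain $\tfrac{n+1}{2}\,l(v_{0})=e$, and finish with the coprimality of $n$ and $(n+1)/2$ (equivalently, one could double to get $(n+1)l(v_{0})=e$ and subtract $n\,l(v_{0})=e$). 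Both arguments rest on the same two pillars, $s(\Gamma)=e$ for odd order and Lagrange's theorem; the paper's total-weight count is shorter, while your version yields the extra structural information that every matching pair must sum to $l(v_{0})$, which makes the necessity of $l(v_{0})=e$ and the construction in the converse visibly two sides of the same constraint.
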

\begin{proof}
	Let $\Gamma$ be an abelian group with $|\Gamma|=n$.
	Define the vertex set of $G$ as, $\{v_0\} \cup \{v_{i},v_{i}': 1\leq i\leq \frac{n-1}{2}\}$, where
	$v_{i}$ and $v_{i}'$ are twin vertices of $K_{n-1}-M$ and $v_{0}$ is the vertex, which induces $K_{1}$ of $G$. Let $l$ be a $\Gamma$-distance magic labeling of $G$ with magic constant $\mu_{0}$. We know that $ng=e$ for any element $g$ of $\Gamma$, in particular, $n\mu_{0} = e$. By using Lemma \ref{grp1} and by comparing the total weights, we get
	\begin{equation*}
	n\mu_{0} = (n-1)l(v_{0})+(n-2)\bigl(s(\Gamma)-l(v_{0})\bigr),
	\end{equation*}
	which implies that $l(v_0)$ is $e$.
	\par When $n$ is odd, for every element $g$ of $\Gamma$, there exists a unique $-g$ different from $g$ in $\Gamma$ such that $g+(-g) = e$.  Consider a function $l$ from $V(G)$ to $\Gamma$ as, $l(v_{0})=e$ and if $l(v_{i}) = g$, then $l(v_{i}')= -g,$ for  all $i\ne0$. It is not hard to verify that, the weight of each vertex of $G$  is $e$. Thus, $l$ is a $\Gamma$-distance magic labeling of $G$ with magic constant $\mu_{0}=e$.
\end{proof}
\par Theorem \ref{bipart} and \ref{treegrp} confirm the fact that the number of group distance magic graphs on $n$ vertices with maximum degree $n-1$ is comparatively higher than the number of distance magic graphs with maximum degree $n-1$. Hence it is worth mentioning the group distance magic labeling of bi-regular graph with regularities $r_{1}= n-1$ and $r_{2}$, where $r_{2}$ is from the set $\{1,2,3,n-3\}$.
\par Let $n>3$ be an odd integer and $G$ be a bi-regular graph on $n$ vertices with a unique vertex $v$ of degree, $r_{1}=n-1$, and all other vertices of degree $r_{2}$, where $r_{2}=1,2,3$ or $n-3$. It is observed that if $l(v) \not= e$, then $l$ is not an $\Gamma$-distance magic labeling of $G$, for any abelian group $\Gamma$ with $|\Gamma|=n$.
\par Let $v_{e}$ and $v_{-g}$ be the vertices of $G$ labeled with $e$ and $-g$ respectively. On contrary, if for a given abelian group $\Gamma$ with  $|\Gamma|=n$, there exists a $\Gamma$-distance magic labeling $l$ such that $l(v)=g\not=e$. Then, $w_{G}(v)=-g$, for every $v$ in $G$.\\
\textbf{Case 1}: If $r_{2}=1$, then comparing $w_{G}(v_{e})$ and $w_{G}(v)$, we get $g=-g\implies 2g=e$, a contradiction.\\
\textbf{Case 2}: If $r_{2}=2$, then there exists a unique vertex $v^{*}\neq v$, $v_{e}\in N_{G}(v^{*})$. Now, comparing the weights $w_{G}(v^{*})$ and $w_{G}(v)$, we get $2g=e$, a contradiction.\\
\textbf{Case 3}: If $r_{2}=3$,	then consider a vertex $v^{*}\neq v$, $v^{*}\in N_{G}(v_{-g})$. Further, comparing the weights $w_{G}(v^{*})= g+(-g)+l(v^{**}) = w_{G}(v)=-g$, we get $l(v^{**})=-g $, a contradiction for $l$ being one-one.\\
\textbf{Case 4}: If $r_{2}=n-3$, then we get $e = n(-g) = (n-1-(n-3))g$ or $2g=e$, a contradiction.
\par When $G$ is a regular graph and $H$ is a balanced-$dmg$, the distance magicness and the group distance magicness of $G\circ H$ $(  $only when $ H\cong C_{4})$ and $G\times H$ are characterized by Theorem \ref{shafiq} and \ref{directproduct} respectively. In the case of a non-regular graph $G$, analogous to Problem \ref{prb1} and \ref{prb2}, natural questions arise on the existence of group distance magic labeling of $G\circ H$ and $G\times H$. The following section provides partial solutions to these problems.
\section{Group distance magic labeling of lexicographic product and direct product of two graphs}
\noindent Throughout this section, we assume that $H$ is a balanced-$dmg$ on either $2^{k}$ or $4k+2$ vertices, $\Gamma$ is an abelian group and $\mathcal{A}$ is an abelian group with elements, $a_{0},a_{1}...,a_{|\mathcal{A}|-1}$, where $a_0$ is the identity element in $\mathcal{A}$. 
Observe that $C_{4}\cong K_{4}-M$ is balanced-$dmg$ of order $2^2$ and $C_{4k+2}^{2k}\cong K_{4k+2}-M$ is a balanced-$dmg$ of order $4k+2$, where $M$ is a perfect matching.
\begin{theorem}
	Let $G$ be a graph on $n$ vertices and $\Gamma$ be an abelian group with $|\Gamma|=(4k+2)n$ such that $\Gamma\cong \mathbb{Z}_{4k+2}\times \mathcal{A}$, where $k\geq 1$ and  $\mathcal{A}$ an abelian group with $|\mathcal{A}|=n$.
	\begin{enumerate}
		\item[\textnormal{(i)}] If the degree of the vertices of $G$ are either all even or all odd, then $G\circ C_{4k+2}^{2k}$ is $\Gamma$-distance magic.
		\item[\textnormal{(ii)}] If there exists a constant $m\in \mathbb{N}$ such that $deg_{G}(u) \equiv m\mod4k+2$, for all $u\in V(G)$, then $G \times C_{4k+2}^{2k}$ is $\Gamma$-distance magic.
	\end{enumerate}
\end{theorem}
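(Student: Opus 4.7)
The plan is to exploit the balanced-$dmg$ structure of $H = C_{4k+2}^{2k} \cong K_{4k+2} - M$, which partitions $V(H)$ into $2k+1$ twin pairs $\{x_i, y_i\}_{i=1}^{2k+1}$ with $N_H(x_i) = N_H(y_i) = V(H) \setminus \{x_i, y_i\}$. I will build a single bijection $l : V(G) \times V(H) \to \Gamma$ (the common vertex set of $G \circ H$ and $G \times H$) whose $\mathbb{Z}_{4k+2}$-coordinates balance twins to a fixed element while its $\mathcal{A}$-coordinates cancel within each twin pair; both weight identities will then collapse to arithmetic in $\mathbb{Z}_{4k+2}$.

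First I pick any odd $c \in \mathbb{Z}_{4k+2}$ (for concreteness $c = 1$). Since $2x = c$ has no solution when $c$ is odd, the involution $x \mapsto c - x$ on $\mathbb{Z}_{4k+2}$ is fixed-point free, partitioning the group into $2k+1$ pairs summing to $c$; matching these arbitrarily with the twin pairs of $H$ yields a bijection $\beta : V(H) \to \mathbb{Z}_{4k+2}$ with $\beta(x_i) + \beta(y_i) = c$ for every $i$. With $V(G) = \{u_0,\dots,u_{n-1}\}$ indexed by $\{a_0,\dots,a_{n-1}\} = \mathcal{A}$, set
\[ l(u_j, x_i) = (\beta(x_i), a_j), \qquad l(u_j, y_i) = (\beta(y_i), -a_j). \]
For each fixed $i$ these two families sweep $\{\beta(x_i), \beta(y_i)\} \times \mathcal{A}$ as $j$ varies, so the union over $i$ is all of $\mathbb{Z}_{4k+2} \times \mathcal{A}$ and $l$ is a bijection onto $\Gamma$. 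By construction each twin pair sums to $C := (c, e)$, hence the column sum
\[ S_j := \sum_{v \in V(H)} l(u_j, v) = (2k+1)\,C = (2k+1,\, e) =: S \]
is independent of $j$, where I use the identity $(2k+1)c \equiv 2k+1 \pmod{4k+2}$ valid for every odd $c$.

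Two neighborhood identities now close both parts. For the lexicographic product, splitting the neighborhood of $(u_j, v)$ into its $G$-contribution and its $H$-contribution gives
\[ w_{G \circ H}((u_j, v)) = \deg_G(u_j)\,S + S - C, \]
whose second coordinate is $e$ and whose first coordinate equals $(\deg_G(u_j)+1)(2k+1) - c$; because $2S = (0, e)$, this depends only on the parity of $\deg_G(u_j)$, so a common parity across $V(G)$ forces a constant weight and settles part (i). For the direct product the neighborhood of $(u_j, v)$ is $N_G(u_j) \times (V(H) \setminus \{x_i, y_i\})$, and the weight collapses to
\[ w_{G \times H}((u_j, v)) = \deg_G(u_j)\,(S - C), \]
whose first coordinate is $(2k+1 - c)\,\deg_G(u_j)$; the hypothesis $\deg_G(u) \equiv m \pmod{4k+2}$ immediately reduces this to the constant $(2k+1-c)\,m$, yielding part (ii). The only delicate step is producing $\beta$ with the twin-sum property while simultaneously keeping $l$ bijective onto $\Gamma$; once that combinatorial choice is in place, the remainder is bookkeeping with the involution $2k+1 \in \mathbb{Z}_{4k+2}$ and the cancellation $a_j + (-a_j) = e$.
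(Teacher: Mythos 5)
Your proof is correct and follows essentially the same route as the paper: the paper's labeling $l(x_i^j)=(j,a_i)$, $l(x_i^{j'})=(4k+1,a_0)-l(x_i^j)$ is exactly your construction with the odd constant $c=4k+1$, i.e.\ twin pairs summing to an odd element of $\mathbb{Z}_{4k+2}$ with the $\mathcal{A}$-coordinates $a_j$ and $-a_j$ cancelling, and both proofs then reduce the lexicographic and direct product weights to the same arithmetic modulo $4k+2$. Your phrasing via $2S=(0,e)$ and an arbitrary odd $c$ is a slightly more abstract packaging of the identical computation.
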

\begin{proof}
	Let $G$ be a graph with the vertices $u_{0},...,u_{n-1}$ and $H \cong C_{4k+2}^{2k}$ be a balanced-$dmg$ with the vertices
	$x^{0},x^{0'},...,x^{2k},x^{(2k)'}$.
	For any $i\in\{0,...,n-1\},$ let $H_{i}=\{x_{i}^{0}, x_{i}^{0'},...,x_{i}^{2k}, x_{i}^{(2k)'}\}$ be the vertices of $G \circ H$ and $G\times H$ that replace $u_i$ of $G$.
	\par Using the isomorphism $\phi:\Gamma \rightarrow \mathbb{Z}_{4k+2}\times \mathcal{A}$, we identify $g\in \Gamma$ with its image $\phi(g) = (z,a_{i})$, where $z\in \mathbb{Z}_{4k+2}$ and $a_{i}\in \mathcal{A}$, $i$ varies from $0$ to $n-1.$
	\par For all $i$ and for $j\in\{0,...,2k\},$ define a function $l$ as,
	\begin{eqnarray*} 	
		l(x_{i}^{j}) &=& (j,a_{i})\\
		l(x_{i}^{j'}) &=& (4k+1,a_{0})-l(x_{i}^{j}).
	\end{eqnarray*}
	\par Note that, the label sum of all the vertices of $H_{i}$ is $(2k+1,a_{0})$, which is independent of $i$.
	\par For all $i=0,...,n-1,$ if the degree of vertex $u_{i}$ is $2t_{i}$ for some $t_{i}\geq 1$, then for every vertex $v\in H_{i}$,
	\begin{eqnarray*}
		w_{G\circ H}(v) &=& \sum_{ \substack{v^{*}\in N_{G\circ H}(v), \\ v^{*} \not\in N_{G\circ H[H_{i}]}(v)}}l(v^{*})+ \sum_{v^{**}\in N_{G\circ H[H_{i}]}(v)}l(v^{**})\\
		&=& 2t_{i}(2k+1,a_{0})+2k(4k+1,a_{0}) = (2k+2,a_{0}),
	\end{eqnarray*}
	and, if the degree of vertex $u_{i}$ is $2t_{i}+1$, for some $t_{i}\geq 0,$ then for every vertex $v\in H_{i}$,
	\begin{eqnarray*}
		w_{G\circ H}(v) &=& \sum_{\substack{v^{*}\in N_{G\circ H}(v),\\ v^{*} \not\in N_{G\circ H[H_{i}]}(v)}}l(v^{*})+ \sum_{v^{**}\in N_{G\circ H[H_{i}]}(v)}l(v^{**})\\
		&=& (2t_{i}+1)(2k+1,a_{0})+2k(4k+1,a_{0}) = (1,a_{0}).
	\end{eqnarray*}
	\par Further, the degree of each vertex $u$ of $G$ is congruent to $m$ modulo $(4k+2)$. Then, for every vertex $v$ of $G\times H$,
	\begin{eqnarray*}
		w_{G\times H}(v) &=& \sum_{v^{*}\in N_{G\times H}(v)}l(v^{*}) = 2k((4k+2)k'+m)(4k+1,a_{0}) = (-2mk,a_{0}).
	\end{eqnarray*}
\end{proof}
\par In the following results, we assume $G$ is a graph with vertices $u_{0},...,u_{n-1}$ and $H$ is a balanced-$dmg$ with the vertices $ x^{0},x^{0'},...,x^{2^{k-1}-1},x^{(2^{k-1}-1)'}$, in which $x^{j}$ and $x^{j'}$ are the twin vertices for all $j\in\{0,...,2^{k-1}-1\}$. Moreover, for any $i\in\{0,...,n-1\}$, we choose $H_{i}=\bigl\{x_{i}^{0}, x_{i}^{0'},...,x_{i}^{2^{k-1}-1}, x_{i}^{(2^{k-1}-1)'}\bigr\}$ as the vertex set of $G \circ H$ and $G\times H$, that replaces $u_i$ of $G$ in which $x_{i}^{j}$ and $x_{i}^{j'}$ are the twin vertices. 


\begin{lemma}\label{lem1}
	Let $G$ be a graph on $n$ vertices and $\Gamma$ be an abelian group with $|\Gamma|= 2^{k}n$, where $k\geq 2$ such that $\Gamma\cong \mathbb{Z}_{2^s}\times \mathcal{A}$ for $1\leq s \leq k-1$, $\mathcal{A}$ an abelian group with $|\mathcal{A}|=2^{k-s}n$. Let $H$ be a balanced-$dmg$ on $2^{k}$ vertices. Then,
	\begin{enumerate}
		\item[\textnormal{(i)}] $G\circ H$ is $\Gamma$-distance magic.
		\item[\textnormal{(ii)}]If there exists a constant $m\in \mathbb{N}$ such that $deg_G(u) \equiv m\mod2^{s}$ for all $u\in V(G)$, then $G\times H$ is $\Gamma$-distance magic.	
	\end{enumerate}
	
\end{lemma}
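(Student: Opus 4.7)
The plan is to adapt the pair-labelling idea of the preceding theorem by making each twin pair have constant label-sum $c:=(1,a_{0})$, viewed in $\Gamma\cong\mathbb{Z}_{2^s}\times\mathcal{A}$. Because $1\notin 2\mathbb{Z}_{2^s}$ (using $s\ge 1$), we have $c\notin 2\Gamma$, so the involution $\sigma:g\mapsto c-g$ of $\Gamma$ has no fixed points. Consequently $\sigma$ partitions $\Gamma$ into $|\Gamma|/2=2^{k-1}n$ two-element orbits $\{g,c-g\}$, and this is precisely the number of twin pairs in $\bigsqcup_{i=0}^{n-1}H_{i}$.

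I now fix an arbitrary bijection between the $\sigma$-orbits of $\Gamma$ and the twin pairs of $\bigsqcup_{i}H_{i}$, and for each twin pair $\{x_{i}^{j},x_{i}^{j'}\}$ matched with an orbit $\{g,c-g\}$ I put $l(x_{i}^{j})=g$ and $l(x_{i}^{j'})=c-g$. By construction, $l$ is a bijection onto $\Gamma$, every twin pair has label-sum $c$, and hence every copy $H_{i}$ has total label sum
\[
S\;=\;2^{k-1}\,c\;=\;(2^{k-1}\bmod 2^{s},\,a_{0})\;=\;e,
\]
where the last equality uses $s\le k-1$. The balanced-$dmg$ property of $H$ (neighbourhoods are closed under twinning) implies that $N_{H}(v)$ is a disjoint union of $d/2$ twin pairs for every $v$, where $d$ is the common degree of $H$ (necessarily even).

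For part (i), the neighbourhood of $v\in H_{i}$ in $G\circ H$ is $\bigcup_{u_{i'}\in N_{G}(u_{i})}H_{i'}$ together with $N_{H_{i}}(v)$, and so
\[
w_{G\circ H}(v)\;=\;\deg_{G}(u_{i})\cdot S + (d/2)\,c\;=\;(d/2)\,c,
\]
independent of $v$. For part (ii), the neighbours of $v=x_{i}^{j}$ in $G\times H$ are precisely the vertices $x_{i'}^{j''}$ with $u_{i'}\in N_{G}(u_{i})$ and $x^{j''}\in N_{H}(x^{j})$; for each such $i'$ the contribution from $H_{i'}$ is a sum over $d/2$ twin pairs, hence equals $(d/2)\,c$. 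Thus $w_{G\times H}(v)=\deg_{G}(u_{i})\cdot(d/2)\,c$, and since $c$ has order $2^{s}$ in $\Gamma$, the hypothesis $\deg_{G}(u)\equiv m\pmod{2^{s}}$ collapses this to $m\cdot(d/2)\,c$, again a constant.

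The one delicate choice is $c$ itself: the naive attempt $c=e$ (i.e.\ pairing $g$ with $-g$) fails, because $|\mathcal{A}|=2^{k-s}n$ is even and therefore $\Gamma$ has at least two involutions, so $g\mapsto-g$ has $\ge 2$ fixed points and does not yield enough free $\{g,-g\}$-orbits to partition $\Gamma$. Selecting $c=(1,a_{0})$ instead exploits the $\mathbb{Z}_{2^s}$-factor to render $\sigma$ free; both the assumption $s\ge 1$ (for bijectivity via $c\notin 2\Gamma$) and $s\le k-1$ (for $S=e$) are used in exactly these two places.
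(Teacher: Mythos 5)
Your proof is correct and follows essentially the same route as the paper's: both arguments make every twin pair carry a constant label sum $c$ whose order divides $2^{s}$ (the paper takes $c=(2^{s}-1,a_{0})$ via an explicit coordinate formula, you take $c=(1,a_{0})$ via the fixed-point-free involution $g\mapsto c-g$), so that each copy $H_{i}$ sums to the identity because $s\le k-1$, after which the weight computations for $G\circ H$ and $G\times H$ collapse exactly as in the paper, using $\deg_G(u)\equiv m \pmod{2^{s}}$ only for the direct product. The only difference is presentational: your abstract orbit-matching makes bijectivity of the labeling immediate, whereas the paper's explicit formula leaves that as a routine verification.
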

\begin{proof}
	\par Using the isomorphism $\phi:\Gamma \rightarrow \mathbb{Z}_{2^s}\times \mathcal{A}$, we identify $g\in \Gamma$ with its image $\phi(g) = (z,a_{i})$, where $z\in \mathbb{Z}_{2^s}$ and $a_{i}\in \mathcal{A}$, $i$ varies from $0$ to $2^{k-s}n-1.$
	\par For $i \in \{0,...,n-1\}$ and $\alpha \in\{0,...,2^{s-1}-1\}$, define a function $l$ as
	\begin{eqnarray*}
		l(x_{i}^{j}) &=& \bigl(\alpha,a_{j~(\textnormal{mod}~ 2^{k-s})+2^{k-s}i}\bigr), \textnormal{~where}~ \alpha 2^{k-s}\leq j\leq (\alpha+1)2^{k-s}-1,\\
		l(x_{i}^{j'}) &=& (2^{s}-1,a_{0})-l(x_{i}^{j}).
	\end{eqnarray*} Now for each $ i=0,...,n-1$, the label sum of all the vertices of $H_{i}$ is,
	\begin{equation*}
	2^{k-1}\bigl(2^{s}-1, a_{0}\bigr)=\bigl(-2^{k-1}, a_{0}\bigr)=(0, a_{0}),
	\end{equation*}which is the identity element of $\mathbb{Z}_{2^{s}} \times \mathcal{A}$ and label sum is independent of $i$.
	\par Note that, the degree of each vertex of $H$ is $2r$.
	For all $i=0,...,n-1,$ the vertex $v\in H_{i}$ has weight,
	\begin{eqnarray*}
		w_{G\circ H}(v) &=& \sum_{\substack{v^{*}\in N_{G\circ H}(v),\\ v^{*}\not\in N_{G\circ H[H_{i}]}(v)}}l(v^{*})+ \sum_{v^{**}\in N_{G\circ H[H_{i}]}(v)}l(v^{**})\\
		&=& deg_{G}(u_{i})(0,a_{0})+r(2^{s}-1, a_{0}) = (-r,a_{0}).
	\end{eqnarray*}
	\par Moreover, if the degree of each vertex $u$ of $G$ is congruent to $m$ modulo $2^{s}$ then, for every $v$ of $G\times H$,
	\begin{eqnarray*}
		w_{G\times H}(v) &=& \sum_{v^{*}\in N_{G\times H}(v)}l(v^{*}) = r(2^sk'+m)(2^s-1,a_{0}) = (-mr,a_{0}).
	\end{eqnarray*}
\end{proof}

\begin{lemma}\label{lem2}
	Let $G$ be a graph on $n$ vertices and $\Gamma$ be an abelian group with $|\Gamma|= 2^{k}n$, such that $\Gamma\cong \mathbb{Z}_{2^s}\times \mathcal{A}$, where $2 \leq k \leq s$ and $\mathcal{A}$ is an abelian group with $|\mathcal{A}|=2^{k-s}n$.  Let $H$ be a balanced-$dmg$ on $2^{k}$ vertices.
	\begin{enumerate}
		\item[\textnormal{(i)}] If there exists a constant $m\in \mathbb{N}$ such that $deg_G(u) \equiv m\mod2^{s-1}$ for all $u\in V(G)$, then $G\circ H$ is $\Gamma$-distance magic.
		\item[\textnormal{(ii)}] If there exists a constant $m\in \mathbb{N}$ such that $deg_G(u) \equiv m\mod2^{s}$ for all $u\in V(G)$, then $G\times H$ is $\Gamma$-distance magic.
	\end{enumerate}
	
\end{lemma}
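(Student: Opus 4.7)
The approach is to mirror Lemma \ref{lem1} but in the regime $k\leq s$, where $\mathbb{Z}_{2^s}$ is at least as large as the index set $\{0,\dots,2^{k-1}-1\}$ of twin pairs inside each $H_i$. I would again identify $\Gamma$ with $\mathbb{Z}_{2^s}\times\mathcal{A}$ via $\phi$ and arrange each twin pair to have labels summing to $(2^s-1,a_0)$. The key contrast with Lemma \ref{lem1} is that now $\sum_{v\in H_i}l(v)=2^{k-1}(2^s-1,a_0)=(-2^{k-1},a_0)$ is no longer the identity of $\Gamma$; the degree-congruence hypothesis is exactly what is needed to make the resulting contribution $\deg_G(u_i)\cdot(-2^{k-1},a_0)$ constant modulo $2^s$.

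For the labelling, since $|\mathcal{A}|=2^{k-s}n$ is a positive integer we have $2^{s-k}\mid n$, so every $i\in\{0,\dots,n-1\}$ admits a unique decomposition $i=i_1+2^{s-k}i_2$ with $i_1\in\{0,\dots,2^{s-k}-1\}$ and $i_2\in\{0,\dots,|\mathcal{A}|-1\}$. For $j\in\{0,\dots,2^{k-1}-1\}$ set
\begin{equation*}
l(x_i^{j})=\bigl(i_1+j\cdot 2^{s-k},\,a_{i_2}\bigr),\qquad l(x_i^{j'})=(2^s-1,a_0)-l(x_i^{j}).
\end{equation*}
As $(i_1,i_2,j)$ varies, the first formula realises each element of $\{0,\dots,2^{s-1}-1\}\times\mathcal{A}$ exactly once; since $2^s-1$ is odd while $2z$ is even in $\mathbb{Z}_{2^s}$, the involution $g\mapsto(2^s-1,a_0)-g$ is fixed-point-free, so the second formula covers the complementary half, making $l$ a bijection onto $\Gamma$. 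By construction each twin pair sums to $(2^s-1,a_0)$, whence $\sum_{v\in H_i}l(v)=(-2^{k-1},a_0)$ is independent of $i$.

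For (i), a vertex $v\in H_i$ in $G\circ H$ is adjacent to all of $H_{i'}$ for $u_{i'}\in N_G(u_i)$, plus its $2r$ neighbours inside $H_i$; the latter split into $r$ twin pairs (since in a balanced-$dmg$ each vertex and its twin share the same open neighbourhood), contributing $r(2^s-1,a_0)=(-r,a_0)$. Thus
\begin{equation*}
w_{G\circ H}(v)=\deg_G(u_i)\cdot(-2^{k-1},a_0)+(-r,a_0),
\end{equation*}
and the hypothesis $\deg_G(u)\equiv m\pmod{2^{s-1}}$ combined with $k\geq 2$ yields $\deg_G(u_i)\cdot 2^{k-1}\equiv m\cdot 2^{k-1}\pmod{2^s}$ (because $2^{s-1}\cdot 2^{k-1}=2^{s+k-2}$ is a multiple of $2^s$), giving the constant weight $(-m\cdot 2^{k-1}-r,\,a_0)$. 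For (ii), the neighbours of $v=x_i^{j}$ in $G\times H$ are the vertices $x_{i'}^{j''}$ with $u_{i'}\in N_G(u_i)$ and $x^{j''}\in N_H(x^{j})$; the $2r$ vertices of $N_H(x^{j})$ again split into $r$ twin pairs, so restricted to each $H_{i'}$ they contribute $(-r,a_0)$, whence $w_{G\times H}(v)=(-\deg_G(u_i)\cdot r,\,a_0)$, which collapses to the constant $(-mr,a_0)$ under the stronger hypothesis $\deg_G(u)\equiv m\pmod{2^s}$. The main obstacle is getting the labelling right: the first coordinate has to be distributed across different copies $H_i$ rather than held constant within a block as in Lemma \ref{lem1}, and verifying bijectivity requires combining the range analysis of $i_1+j\cdot 2^{s-k}$ on $\{0,\dots,2^{s-1}-1\}$ with the fixed-point-freeness of the involution on $\mathbb{Z}_{2^s}\times\mathcal{A}$.
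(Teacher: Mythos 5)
Your proposal is correct and follows essentially the same route as the paper: identify $\Gamma$ with $\mathbb{Z}_{2^s}\times\mathcal{A}$, label so that each twin pair sums to $(2^s-1,a_0)$ and each block $H_i$ sums to $(-2^{k-1},a_0)$, then use the degree congruences together with $k\geq 2$ to make the weights $(-2^{k-1}m-r,a_0)$ and $(-mr,a_0)$ constant. Your labeling $l(x_i^j)=(i_1+j\cdot 2^{s-k},a_{i_2})$ is just a mixed-radix reparametrization of the paper's $l(x_i^j)=\bigl((2^{k-1}i+j)\bmod 2^{s-1},a_{\lfloor i/2^{s-k}\rfloor}\bigr)$, and your explicit bijectivity check is a welcome extra the paper omits.
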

\begin{proof}
	\par Using the isomorphism $\phi:\Gamma \rightarrow \mathbb{Z}_{2^s}\times \mathcal{A}$, we identify $g\in \Gamma$ with its image $\phi(g) = (z,a_{i})$, where $z\in \mathbb{Z}_{2^s}$ and $a_{i}\in \mathcal{A}$, $i$ varies from $0$ to $2^{k-s}n-1.$
	\par	Consider the function $l$,
	\begin{eqnarray*}
		l(x_{i}^{j}) &=&\bigl((2^{k-1}i+j)\mod 2^{s-1},a_{\lfloor 2^{k-s}i\rfloor}\bigr),\\
		l(x_{i}^{j'}) &=& \bigl(2^{s}-1,a_{0}\bigr)-l(x_{i}^{j}),
	\end{eqnarray*}
	where $i\in\{0,...,n-1\}$ and $j\in\{0,...,2^{k-1}-1\}$.		
	For each $i=0,...,n-1$, the label sum of all the vertices of $H_{i}$ is $(-2^{k-1},a_{0})$, which is independent of $i$. Recall that the degree of any vertex of $H$ is $2r$. Since the degree of any vertex $u$ of $G$ is congruent to $m$ modulo $2^{s-1}$,
	for all $i=0,...,n-1,$ the vertex $v\in H_{i}$ has weight,
	\begin{eqnarray*}
		w_{G\circ H}(v) &=& \sum_{\substack{v^{*}\in N_{G\circ H}(v),\\ v^{*}\not\in N_{G\circ H[H_{i}]}(v)}}l(v^{*})+ \sum_{v^{**}\in N_{G\circ H[H_{i}]}(v)}l(v^{**})\\
		&=& (2^{s-1}k'+m)(-2^{k-1},a_{0})+r(2^s-1,a_{0})= (-r-2^{k-1}m, a_{0}).
	\end{eqnarray*}
	\par On the other hand, if the degree of any vertex $u$ of $G$ is congruent to $m$ modulo $2^{s}$ then for every $v$ of $G\times H$,
	\begin{eqnarray*}
		w_{G\times H}(v) &=& \sum_{v^{*}\in N_{G\times H}(v)}l(v^{*}) = r(2^sk'+m)(2^s-1,a_{0}) = (-mr,a_{0}).
	\end{eqnarray*}
\end{proof}
\begin{theorem}
	Let $G$ be a graph on $n$ vertices and $\Gamma$ be an abelian group with $|\Gamma|= 2^kn$, where $k\geq 2$, $n=2^s(2t+1)$, for some non-negative integers $s$ and $t$. Let $H$ be a balanced-$dmg$ on $2^k$ vertices.
	\begin{enumerate}
		\item[\textnormal{(i)}] If there exists a constant $m\in \mathbb{N}$ such that $deg_G(u) \equiv m\mod2^{k+s-1}$ for all $u\in V(G)$, then $G\circ H$ is $\Gamma$-distance magic.
		\item[\textnormal{(ii)}] If there exists a constant $m\in \mathbb{N}$ such that $deg_G(u) \equiv m\mod2^{k+s}$ for all $u\in V(G)$, then $G\times H$ is $\Gamma$-distance magic.	\end{enumerate}
\end{theorem}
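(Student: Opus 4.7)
The plan is to invoke Lemma \ref{lem1} or Lemma \ref{lem2} after a suitable primary decomposition of $\Gamma$. Since $|\Gamma|=2^{k+s}(2t+1)$, the fundamental theorem of finite abelian groups gives
$$\Gamma \cong \mathbb{Z}_{2^{q_1}}\times \cdots \times \mathbb{Z}_{2^{q_r}} \times \mathcal{O},$$
where $\mathcal{O}$ is the odd-order part and $q_1,\dots,q_r\geq 1$ satisfy $q_1+\cdots+q_r=k+s$. For any fixed index $j$, by pulling the factor $\mathbb{Z}_{2^{q_j}}$ to the front and absorbing the remaining factors into an abelian group $\mathcal{A}$ of order $2^{k-q_j}n$, we obtain $\Gamma\cong \mathbb{Z}_{2^{q_j}}\times \mathcal{A}$, which is exactly the shape required by the hypotheses of Lemmas \ref{lem1} and \ref{lem2}.

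I would then split into two cases based on the sizes of the $q_j$. \textbf{Case A:} some $q_j\leq k-1$. Fix such an index and set $q:=q_j$, so $1\leq q\leq k-1$. Lemma \ref{lem1} applies to the decomposition $\mathbb{Z}_{2^q}\times \mathcal{A}$: part (i) needs no degree hypothesis and yields the lex-product claim, while part (ii) needs only $deg_G(u)\equiv m \pmod{2^q}$, which is immediate from the given $\pmod{2^{k+s}}$ condition because $q\leq k+s$. \textbf{Case B:} every $q_j\geq k$. Pick any $j$ and set $q:=q_j$; then $k\leq q\leq k+s$, so Lemma \ref{lem2} applies. The lex-product congruence $\pmod{2^{q-1}}$ follows from the given $\pmod{2^{k+s-1}}$ since $q-1\leq k+s-1$, and the direct-product congruence $\pmod{2^q}$ follows from the given $\pmod{2^{k+s}}$ since $q\leq k+s$.

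The only step that requires any care is verifying that these two cases are exhaustive (which they are, by trichotomy on the minimum $q_j$) and that the prescribed moduli $2^{k+s-1}$ and $2^{k+s}$ are sharp enough to cover every cyclic $2$-power factor that might be extracted; both follow from the bound $q_j\leq k+s$ on each cyclic factor of the $2$-Sylow subgroup. Once the appropriate $\mathbb{Z}_{2^q}\times \mathcal{A}$ presentation is selected, the explicit labelings constructed in the proofs of Lemmas \ref{lem1} and \ref{lem2} already furnish the desired $\Gamma$-distance magic labelings of $G\circ H$ and $G\times H$, so nothing further needs to be computed.
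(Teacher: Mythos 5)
Your proposal is correct and follows essentially the same route as the paper: decompose $\Gamma$ by the fundamental theorem of finite abelian groups, extract one cyclic $2$-power factor $\mathbb{Z}_{2^{q}}$ with the rest absorbed into $\mathcal{A}$, and apply Lemma \ref{lem1} when $1\leq q\leq k-1$ or Lemma \ref{lem2} when $k\leq q\leq k+s$, noting that the stated moduli $2^{k+s-1}$ and $2^{k+s}$ dominate every modulus the lemmas require. Your explicit trichotomy on the minimum exponent is in fact a slightly cleaner write-up of the paper's (somewhat terser) case distinction, but the argument is the same.
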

\begin{proof}
	By the fundamental theorem of finite abelian groups,  $\Gamma\cong \mathbb{Z}_{2^{n_{0}}}\times \mathbb{Z}_{p_{1}^{n_{1}}} \times ...\times \mathbb{Z}_{p_{r}^{n_{r}}}$ where $2^kn = 2^{n_{0}}\prod_{i=1}^{r}p_{i}^{n_{i}}$, $p_{i}$'s not necessarily distinct primes and $n_{0}>0$.
	\par Note that if any vertex $u$ of $G$ is such that $deg_G(u)\equiv m \mod 2^{k+s}$, then there exist unique integers $m_{i}$'s such that $deg_G(u)\equiv m_{i} \mod 2^{n_{0}}$, where $n_{0} \in\{1,...,k+s-1\}$.
	\par For each $\Gamma$ isomorphic to $\mathbb{Z}_{2^{n_{0}}}\times \mathcal{A}$ with $1\leq n_{0}\leq k-1$, the result follows from Lemma \ref{lem1} and for each $\Gamma$ isomorphic to $\mathbb{Z}_{2^{n_{0}}}\times \mathcal{A}$ with $k\leq n_{0}\leq k+s$, the result follows from Lemma \ref{lem2}, where $\mathcal{A}$ is an abelian group  with $|\mathcal{A}|=\frac{n}{2^{n_{0}-k}}$.
\end{proof}
\begin{theorem}\label{thmnew1}
	Let $G$ be a graph on $n$ vertices and $\Gamma$ be an Abelian group with $|\Gamma|= 2^{k}n$, where $k\geq 2$. If all the vertices of $G$ are of even degree and $H$ is a balanced-$dmg$ on $2^{k}$ vertices, then $G\circ H$ is a $\Gamma$-distance magic graph.
\end{theorem}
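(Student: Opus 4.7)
The plan is to follow the same framework as the preceding theorem in this section. By the fundamental theorem of finite abelian groups I would write $\Gamma\cong\mathbb{Z}_{2^{n_0}}\times\mathcal{A}$ with $n_0\ge 1$ (possible because $|\Gamma|=2^k n$ is even), and split according to how $n_0$ compares with $k$.

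When $1\le n_0\le k-1$, Lemma~\ref{lem1}(i) immediately gives a $\Gamma$-distance magic labelling of $G\circ H$; note that this lemma imposes no hypothesis on the degrees of $G$, so this range is handled for free. When $n_0\ge k$, I would invoke the construction underlying Lemma~\ref{lem2}(i). The key observation to exploit is that, in that construction, the common label sum of every block $H_i$ is $\sigma=(-2^{k-1},a_0)\in\mathbb{Z}_{2^{n_0}}\times\mathcal{A}$, and the weight of a vertex $v\in H_i$ decomposes as $r(2^{n_0}-1,a_0)+\deg_G(u_i)\cdot\sigma$, where $2r$ is the (regular) degree of $H$. If $\sigma$ happens to be an involution of $\Gamma$, then the evenness of $\deg_G(u_i)$ immediately kills the external term $\deg_G(u_i)\cdot\sigma$, so the magic condition reduces to the constant $r(2^{n_0}-1,a_0)$. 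This is exactly the sub-case $n_0=k$, where $-2^{k-1}\equiv 2^{k-1}\pmod{2^{k}}$ is the unique non-identity involution of $\mathbb{Z}_{2^{k}}$, so the Lemma~\ref{lem2}(i) labelling applies verbatim under the sole assumption of even degrees, replacing the stronger mod-$2^{n_0-1}$ congruence demanded there.

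The remaining sub-case $n_0>k$ requires modifying the labelling so that the block sum is again an involution. The natural adjustment is to replace the twin-pair sum $(2^{n_0}-1,a_0)$ by $\tau=(2^{n_0-k},a_0)$, which has order exactly $2^{k}$ in $\Gamma$; then $\sigma'=2^{k-1}\tau=(2^{n_0-1},a_0)$ is an involution of $\Gamma$. One would pair the elements of $\Gamma$ as $\{g,\tau-g\}$ and distribute these pairs among the blocks $H_0,\ldots,H_{n-1}$, using an indexing of the $\mathcal{A}$-coordinate analogous to that in Lemma~\ref{lem1}, so that each twin pair of $H_i$ sums to $\tau$ and each block sums to $\sigma'$. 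Under this labelling, every vertex weight becomes $r\tau$, independent of $i$. The main obstacle is the explicit bijective bookkeeping in this last sub-case: one has to ensure that $\tau$ has no square root in $\Gamma$ so that the pairing $\{g,\tau-g\}$ is a genuine matching of $\Gamma$, and that the distribution of pairs across the $n$ blocks assembles into a bijection onto $V(G\circ H)$. Once the indexing is in place, the weight computation collapses exactly as in the proofs of Lemmas~\ref{lem1} and \ref{lem2}.
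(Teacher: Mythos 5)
Your first two cases are essentially the paper's proof. For $\Gamma\cong\mathbb{Z}_{2^{p}}\times\mathcal{A}$ with $1\le p\le k-1$ the paper likewise just quotes Lemma~\ref{lem1}(i), and for $p=k$ it writes down the labeling $l(x_i^j)=(2j,a_i)$, $l(x_i^{j'})=(2^k-1,a_0)-l(x_i^j)$, whose block sum is the involution $(-2^{k-1},a_0)$ of $\mathbb{Z}_{2^k}\times\mathcal{A}$, so its weight computation $2t_i(-2^{k-1},a_0)+r(2^k-1,a_0)=(-r,a_0)$ is exactly your observation that even degrees annihilate the external term. Your alternative of reusing the Lemma~\ref{lem2}(i) labeling at $s=k$ and relaxing the mod~$2^{k-1}$ hypothesis to mere parity is sound: that labeling is still a bijection when $s=k$, the twin-pair sum is $(2^k-1,a_0)$, and only the parity of $\deg_G(u_i)$ enters the external term.

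The genuine gap is your third sub-case $n_0>k$. There the pair sum $\tau=(2^{n_0-k},a_0)$ satisfies $\tau=2\,(2^{n_0-k-1},a_0)$, so the equation $2g=\tau$ has solutions in $\Gamma$; the map $g\mapsto\tau-g$ therefore has fixed points, and such an element can be placed in no twin pair $\{g,\tau-g\}$ with distinct entries. Consequently the partition of $\Gamma$ you need does not exist, and your own proviso that $\tau$ have no square root is never satisfiable for $n_0>k$; to salvage this case one would need pair sums of odd first coordinate (as in Lemma~\ref{lem2}, which then forces a degree congruence stronger than parity) or block-dependent pair sums as in the $K_{m,n}\circ H$ construction, and neither is carried out. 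You should know, however, that the paper's own proof stops exactly where your second case stops: its two cases cover precisely those $\Gamma$ admitting a cyclic direct factor of order at most $2^k$, which is automatic when $n$ is odd but omits, for instance, $\Gamma\cong\mathbb{Z}_{2^kn}$ with $n$ even. So you correctly reproduce everything the paper actually proves and you rightly flag an additional case hidden in the theorem's stated generality, but your sketch for that case fails as written.
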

\begin{proof}
	If $\Gamma$ is isomorphic to $\mathbb{Z}_{2^p} \times \mathcal{A}$ for $p\in\{1,...,k-1\}$, then the result follows from Lemma \ref{lem1}. Now, suppose that $\Gamma$ is isomorphic to $\mathbb{Z}_{2^k} \times \mathcal{A}$, where $|\mathcal{A}|=n$.\par Using the isomorphism $\phi:\Gamma \rightarrow \mathbb{Z}_{2^k}\times \mathcal{A}$, we identify $g\in \Gamma$ with its image $\phi(g) = (z,a_{i})$, where $z\in \mathbb{Z}_{2^k}$ and $a_{i}\in \mathcal{A}$, $i$ varies from $0$ to $n-1.$
	\par For all $i\in\{0,...,n-1\}$ and $j\in\{0,...,2^{k-1}-1\}$, define $l$ on $G\circ H$ as,
	\begin{eqnarray*}
		l(x_{i}^{j}) &=& (2j,a_{i}),\\
		l(x_{i}^{j'}) &=& (2^{k}-1,a_{0})-l(x_{i}^{j}).
	\end{eqnarray*}
	\par Note that, the label sum of all the vertices of $H_{i}$ is,
	\begin{equation*}
	(2^{k-1}(2^{k}-1)\mod 2^k, a_{0})=(-2^{k-1}, a_{0}),
	\end{equation*}which is independent of $i$.
	\par Since for any $u_{i}$ of $G$, $deg_{G}(u_{i})=2t_{i}$ with $t_{i}\geq 1$ and for any $x$ of $H$, $deg_{H}(x)=2r$, then the degree of $v$ in $G\circ H$ is $2(2^kt_{i}+r)$. Now, the weight of any vertex $v\in H_{i}$ is,
	\begin{eqnarray*}
		w_{G\circ H}(v) &=& \sum_{\substack{v^{*}\in N_{G\circ H}(v)\\ v^{*}\not\in N_{G\circ H[H_{i}]}(v)}}l(v^{*})+ \sum_{v^{**}\in N_{G\circ H[H_{i}]}(v)}l(v^{**})\\
		&=& 2t_{i}(-2^{k-1}, a_{0})+r(2^k-1,a_{0})= (-r,a_{0}).
	\end{eqnarray*}
\end{proof}
\begin{corollary}
	Let $t$ be an odd integer. Let $G\cong K_{m_{1},m_{2},...,m_{t}}$ be a complete $t$-partite graph with, $m=\sum_{i=1}^{t}m_{i}$ and either all $m_{i}$'s are even or all $m_{i}$'s are odd. If $\Gamma$ is an abelian group with $|\Gamma| = 2^km$, then $G\circ H$ is a $\Gamma$-distance magic graph, where $H$ is a balanced-dmg on $2^k$ vertices.\qed
\end{corollary}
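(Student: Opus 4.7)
The plan is to show that under either hypothesis on the $m_i$'s the complete $t$-partite graph $G\cong K_{m_1,\dots,m_t}$ has all vertices of even degree, which then lets me invoke Theorem \ref{thmnew1} directly.

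First I would recall that any vertex in the $i$-th partite set of $K_{m_1,\dots,m_t}$ has degree $m-m_i$, since it is adjacent to every vertex outside its own part. The goal is to check that $m-m_i$ is even for each $i$, so I split into the two cases allowed by the hypothesis. If every $m_i$ is even, then $m=\sum_{i=1}^t m_i$ is even and $m-m_i$ is the difference of two even numbers, hence even. If every $m_i$ is odd, then because $t$ is odd, $m$ is a sum of an odd number of odd integers, hence $m$ itself is odd; now $m-m_i$ is the difference of two odd numbers, hence even.

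Once the degree parity is established, I would apply Theorem \ref{thmnew1} with this $G$: the hypotheses ($k\geq 2$ is implicit in $H$ being a balanced-$dmg$ on $2^k$ vertices together with the examples given, and $|\Gamma|=2^k m=2^k|V(G)|$ matches Theorem \ref{thmnew1}'s requirement) are satisfied. The theorem then delivers a $\Gamma$-distance magic labeling of $G\circ H$, which is exactly the conclusion.

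There is no real obstacle here; the corollary is essentially a parity bookkeeping observation followed by a direct appeal to Theorem \ref{thmnew1}. The only tiny point worth noting is the use of $t$ being odd in the all-odd case, since if $t$ were even the sum $m$ would be even, making $m-m_i$ odd and breaking the argument.
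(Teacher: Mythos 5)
Your proposal is correct and matches the paper's intended argument: the corollary is stated with no written proof precisely because it amounts to the parity observation that every vertex of $K_{m_1,\dots,m_t}$ has even degree $m-m_i$ under either hypothesis (using that $t$ is odd in the all-odd case), followed by a direct application of Theorem \ref{thmnew1}. Nothing further is needed.
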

\par For an abelian group $\Gamma$, the following result discusses the $\Gamma$-distance magic labeling of $K_{m,n}\circ H$, where $m$ and $n$ are of different parity and $H$ is a balanced-dmg on $2^k$ vertices.
\begin{theorem}
	Let $K_{m,n}$ be a complete bipartite graph with $m$ even and $n$ odd and let $\Gamma$ be an abelian group with $2^k(m+n)$ elements, where $k\geq 2$.  If $H$ is a $2r$-regular balanced-$dmg$ on $2^k$ vertices and $r$ is odd, then $K_{m,n}\circ H$ is $\Gamma$-distance magic.
\end{theorem}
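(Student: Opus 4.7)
The plan is to decompose $\Gamma$ via the fundamental theorem of finite abelian groups and then handle two cases. Since $m$ is even and $n$ is odd, $m+n$ is odd, so $\Gamma\cong \mathbb{Z}_{2^{n_0}}\times \mathcal{A}$ for some $n_0\in\{1,\ldots,k\}$, with $\mathcal{A}$ an abelian group of order $2^{k-n_0}(m+n)$. For $1\le n_0\le k-1$, Lemma~\ref{lem1}(i) directly yields a $\Gamma$-distance magic labeling of $K_{m,n}\circ H$, since that lemma places no condition on the underlying graph $G$.

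The substantive case is $n_0=k$, where $\Gamma\cong \mathbb{Z}_{2^k}\times \mathcal{A}$ with $|\mathcal{A}|=m+n$ odd. The obstruction here is that $K_{m,n}$ has vertices of different-parity degree, so the Theorem~\ref{thmnew1} labeling (with every twin pair summing to $(2^k-1,0)$) does not equalize the weights across the two parts. I would compensate by insisting on twin-pair sum $c_u=(2^k-1,0)$ inside each $H_{u_i}$ and $c_v=(2^{k-1}-1,0)$ inside each $H_{v_j}$. Then $S_u=2^{k-1}c_u=(2^{k-1},0)$ and $S_v=2^{k-1}c_v=(2^{k-1},0)$, and a direct calculation using $n,r$ odd and $m$ even shows that the $U$-side weight $nS_v+rc_u$ and the $V$-side weight $mS_u+rc_v$ both reduce to $(2^{k-1}-r,0)$, which is the common magic constant.

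To realize such a labeling, I would use that $|\mathcal{A}|$ is odd to partition the non-identity elements of $\mathcal{A}$ into $(m+n-1)/2$ inverse pairs $\{a,-a\}$, together with the identity $0$. Since $m$ is even, reserve $m/2$ inverse pairs for the $U$-part (two copies $H_{u_i},H_{u_{i+1}}$ per pair) and the remaining $(n-1)/2$ inverse pairs plus $\{0\}$ for the $V$-part. For a $U$-pair $\{a,-a\}$, label $l(x_i^j)=(2j,a)$, $l(x_i^{j'})=(2^k-1-2j,-a)$ in one copy and interchange $a\leftrightarrow -a$ in the other; together these exhaust $\mathbb{Z}_{2^k}\times\{a,-a\}$ with twin-pair sum $c_u$. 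For a $V$-pair $\{b,-b\}$ with $b\ne 0$, take $l(y_j^t)=(t,b)$, $l(y_j^{t'})=(2^{k-1}-1-t,-b)$ in one copy and $l(y_{j+1}^t)=(2^{k-1}+t,-b)$, $l(y_{j+1}^{t'})=(2^k-1-t,b)$ in the other, giving twin-pair sum $c_v$ and covering $\mathbb{Z}_{2^k}\times\{b,-b\}$. Finally, label the $V$-copy associated to $0\in\mathcal{A}$ by pairing $(z,0)$ with $(2^{k-1}-1-z,0)$; this is a valid partition of $\mathbb{Z}_{2^k}\times\{0\}$ into $2^{k-1}$ pairs summing to $c_v$, because the involution $z\mapsto 2^{k-1}-1-z$ on $\mathbb{Z}_{2^k}$ has no fixed points (the equation $2z=2^{k-1}-1$ has no solution, since the right side is odd).

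The main obstacle is the bijection check: one must verify that the $m/2$ $U$-chunks of type $\mathbb{Z}_{2^k}\times\{a,-a\}$, the $(n-1)/2$ $V$-chunks of type $\mathbb{Z}_{2^k}\times\{b,-b\}$, and the single $V$-chunk $\mathbb{Z}_{2^k}\times\{0\}$ together exhaust $\Gamma$ exactly once. This follows from the inverse-pair decomposition of $\mathcal{A}$, but the first-coordinate shifts inside each chunk must be tracked carefully so that no label is repeated within a chunk.
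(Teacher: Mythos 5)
Your proposal is correct and follows the same two-case strategy as the paper: split off a $\mathbb{Z}_{2^{n_0}}$ factor and invoke Lemma~\ref{lem1}(i) when $n_0\le k-1$, and give a direct labeling when $\Gamma\cong\mathbb{Z}_{2^k}\times\mathcal{A}$ with $|\mathcal{A}|=m+n$ odd, using two different twin-pair sums, $(2^{k-1}-1,a_0)$ on one part and $(2^{k}-1,a_0)$ on the other, both of which force every block sum to equal $(2^{k-1},a_0)$. Two differences are worth noting. First, you attach the twin-sum $(2^{k}-1,a_0)$ to the part of size $m$ and $(2^{k-1}-1,a_0)$ to the part of size $n$, the mirror image of the paper's choice; both close, yours with magic constant $(2^{k-1}-r,a_0)$ and the paper's with $(-r,a_0)$, exploiting the parities of $m$, $n$ and $r$ in the same way. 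Second, and more substantively, your explicit partition of $\mathcal{A}$ into inverse pairs, with $m/2$ pairs feeding the even part, $(n-1)/2$ pairs feeding the odd part, and the chunk $\mathbb{Z}_{2^k}\times\{a_0\}$ placed on the odd part (where the fixed-point-free pairing $z\mapsto 2^{k-1}-1-z$ applies), is exactly the bookkeeping needed to make the labeling a bijection. The paper instead assigns second coordinates by index, $a_0,\dots,a_{m-1}$ to the $m$-side and $a_m,\dots,a_{m+n-1}$ to the $n$-side, and does not verify injectivity; with the identity on the $m$-side as indexed, some $a_i$ on that side has $-a_i$ on the other side, and then the first-coordinate sets $\{(2^{k-1}+1)q\}$ and $\{2^{k}-1-2q\}$ overlap (both contain $2^{k-1}+1$), so the labeling as literally written needs precisely the rearrangement you describe. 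In short, yours is the same construction with the bijection made explicit rather than left implicit.
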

\begin{proof}
	If $\Gamma$ is isomorphic to $\mathbb{Z}_{2^p} \times \mathcal{A}$ with $p\in\{1,...,k-1\}$, then the assertion follows from Lemma \ref{lem1}. Suppose that $\Gamma$ is isomorphic to $\mathbb{Z}_{2^k} \times \mathcal{A}$, where $|\mathcal{A}|=m+n$. \par Let $G\cong K_{m,n}$ have the partition sets $X = \{{u}_0,..., {u}_{m-1}\}$ and $Y = \{ {v}_0,..., {v}_{n-1}\}$. Then for each $i\in\{0,...,m-1\}$, let $X_{i}=\{{x}_{i}^0,x_{i}^{0'},...,$$ {x}_{i}^{2^{k-1}-1}$ $,x_{i}^{(2^{k-1}-1)'}\} $ be the vertex set of $G\circ H$, that replace the vertex $u_{i}$ of $G$. Similarly, for each $j\in\{0,...,n-1\}$, let  $Y_{j}=\{{y}_{j}^0,y_{j}^{0'},..., {y}_{j}^{2^{k-1}-1},y_{j}^{(2^{k-1}-1)'}\} $ be the vertex set of $G\circ H$, that replace the vertex $v_{j}$ of $G$.
	\par Now, let the vertex set of $G\circ H$ be $X'\cup Y'$, where $\displaystyle X'=\bigcup_{i=0}^{m-1}X_{i}$ and $Y'=\displaystyle \bigcup_{j=0}^{n-1}Y_{j}$.
	\par Using the isomorphism $\phi:\Gamma \rightarrow \mathbb{Z}_{2^k}\times \mathcal{A}$, we identify $g\in \Gamma$ with its image $\phi(g) = (z,a_{i})$, where $z\in \mathbb{Z}_{2^k}$ and $a_{i}\in \mathcal{A}$, $i$ varies from $0$ to $m+n-1.$
	\par For each $q\in\{0,...,2^{k-1}-1\}$, define $l$ on $X'$ as,
	\begin{eqnarray*}
		l(x_{i}^{q}) &=& ((2^{k-1}+1)q,a_{i}), \textnormal{~and} \\
		l(x_{i}^{q'}) &=& (2^{k-1}-1,a_{0})-l(x_{i}^{q}), \textnormal{~~~for ~all}~ i=0,...,m-1.
	\end{eqnarray*}
	\par  Again, for each $q\in\{0,...,2^{k-1}-1\}$, define $l$ on $Y'$ as,
	\begin{eqnarray*}
		l(y_{j}^{q}) &=& (2q,a_{m+j}),  \textnormal{~and} \\
		l(y_{j}^{q'}) &=& (2^{k}-1,a_{0})-l(y_{j}^{q}), \textnormal{~~~for ~all}~ j=0,...,n-1.
	\end{eqnarray*}
	\par Then for all $i\in\{ 0,...,m-1\}$, the label sum of all vertices of $X_{i}$ is,
	\begin{equation*}
	(2^{k-1}(2^{k-1}-1),a_{0}) = (2^{k-1},a_{0}).
	\end{equation*}
	\par Similarly, for all $j\in\{0,...,n-1\}$, the label sum of all vertices of $Y_{j}$ is,
	\begin{equation*}
	(2^{k-1}(2^{k}-1),a_{0}) = (2^{k-1},a_{0}).
	\end{equation*} \par Further, since $H$ is $2(2t+1)$-regular graph, for every $x$ of $X_{i}$,
	
	
	\begin{eqnarray*}
		w_{G\circ H}(x) &=& \sum_{ \substack{x^{*}\in N_{G\circ H}(x) \textnormal{,} \\ x^{*}\not\in N_{G\circ H[X_{i}]}(x)}}l(x^{*})+ \sum_{x^{**}\in N_{G\circ H[X_{i}]}(x)}l(x^{**})\\
		&=& n(2^{k-1},a_{0})+ (2t+1)(2^{k-1}-1,a_{0}) = \bigl(-(2t+1),a_{0}\bigr),
	\end{eqnarray*}
	and for every $y$ of $Y_{j}$,
	\begin{eqnarray*}
		w_{G\circ H}(y) &=& \sum_{\substack{y^{*}\in N_{G\circ H}(y) \textnormal{,} \\ y^*\not\in N_{G\circ H[Y_{j}]}(y)}}l(y^{*})+ \sum_{y^{**}\in N_{G\circ H[Y_{j}]}(y)}l(y^{**})\\
		&=& m(2^{k-1},a_{0})+ (2t+1)(2^{k}-1,a_{0}) = \bigl(-(2t+1),a_{0}\bigr),
	\end{eqnarray*}
	which completes the proof.
\end{proof}
\section{Conclusion}
\noindent In this paper, we obtain few necessary conditions for a graph to be group distance magic and characterize the group distance magic labeling of a tree, few subclasses of bi-regular graphs and the lexicographic and direct product of a non-regular graph with a balanced distance magic graph.

\end{document}